\documentclass[11pt,reqno]{amsart}
\usepackage[utf8]{inputenc}
\usepackage{amsmath}
\usepackage{amsfonts}
\usepackage{amssymb}
\usepackage{amsthm}
\usepackage{hyperref}
\usepackage{multicol}
\usepackage{pictexwd, dcpic}
\usepackage{fancyhdr}
\usepackage{mathrsfs}
\usepackage{latexsym,mathtools}
\usepackage{bm, enumerate}
\usepackage{graphicx}
\usepackage{color,soul}
\usepackage{xcolor}
\usepackage{hyperref}
\usepackage{dsfont}
\usepackage{todonotes}
\usepackage{nicematrix}
\usepackage{tikz-cd}
\makeatletter
\@namedef{subjclassname@2020}{%
	\textup{2020} Mathematics Subject Classification}
\makeatother

\newcommand{\beq}{\begin{eqnarray}}
	\newcommand{\eeq}{\end{eqnarray}}
\newcommand{\beqn}{\begin{eqnarray*}}
	\newcommand{\eeqn}{\end{eqnarray*}}
\newcommand{\rar}{\rightarrow}

\reversemarginpar  % These 2 commands work very fine

\numberwithin{equation}{section}

\newtheorem{theorem}{\bf Theorem}[section]
\newtheorem{proposition}[theorem]{\bf Proposition}%[section]
\newtheorem{lemma}[theorem]{\bf Lemma}%[section]

\theoremstyle{remark}
\newtheorem{definition}[theorem]{\bf Definition}%[section]
\newtheorem{example}[theorem]{\bf Example}
\newtheorem{remark}[theorem]{\bf Remark}%[section]

\newcommand{\D}{\mathbb{D}}
\newcommand{\N}{\mathbb{N}}
\newcommand{\C}{\mathbb{C}}

\def\D{\mathbb{D}}

\theoremstyle{definition}

\theoremstyle{remark}

\hypersetup{
	colorlinks,
	citecolor=blue,
	linkcolor=blue,
	urlcolor=blue}

\title{}
\begin{document}

%\title[Function theory on the unit ball of $c_0$]{Holomorphic functions on the unit ball of $c_0$: realization, Schur-Agler decomposition and interpolation}
\title[The operator-valued Schur-Agler class in infinitely many variables]{The operator-valued Schur-Agler class in infinitely many variables}

\author[S. Rastogi]{Shubham Rastogi}
\address{Statistics and Mathematics Unit, Indian Statistical Institute, 8th Mile, Mysore Road,
Bangalore, 560059, India}
\email{shubhamrastogi4ever@gmail.com, shubham\_pd@isibang.ac.in}

\author[C. K. Sahu]{Chaman Kumar Sahu}
\address{Department of Mathematics, Indian Institute of Technology Bombay,
Powai, Mumbai, 400076, India}
\email{sahuchaman9@gmail.com, chamanks@math.iitb.ac.in}

\subjclass[2020]{}
\keywords{}

\begin{abstract}
Motivated by a recent work of G. Knese [Canad. Math.  Bull., 2026], we introduce the operator-valued Schur-Agler class in infinitely many variables, consisting of functions whose restrictions to every finite-dimensional polydisk lie in the operator-valued Schur-Agler class. We first establish an operator-valued analogue of Montel's theorem and, using it as a key ingredient, establish an Agler decomposition and a realization formula for functions in this class. The series in the Agler decomposition converges in the strong operator topology, and we show that this convergence cannot, in general, be strengthened to norm convergence. The realization formula yields a holomorphic extension of every function in the operator-valued Schur-Agler class to $\mathrm{Ball}(\ell^\infty)$, the open unit ball of $\ell^\infty$. Finally, as an application, we prove an Agler-Pick interpolation theorem in infinitely many variables.
\end{abstract}

\maketitle

\begin{center}
\begin{minipage}{0.8\textwidth}
\centering

\textbf{Notations}

\vspace{2mm}

\begin{tabular}{ll}
$\mathbb C$
& Complex plane.\\[2mm]
$\mathbb D$
& Open unit disc centered at $0$ in $\mathbb C$.\\[2mm]
$\mathbb D^n$
& Open unit polydisc $\{(z_1,\ldots,z_n)\in\mathbb C^n:z_i\in \D,\ i=1,\ldots,n\}$.\\[2mm]
$\mathbb N$
& Set of natural numbers.\\[2mm]
$\mathbb Z_+$
& Set of all non-negative integers.\\[2mm]
$\mathcal E,\mathcal E_*$
& Complex separable Hilbert spaces.\\[2mm]
$\mathcal L(\mathcal E,\mathcal E_*)$
& Space of bounded linear operators from $\mathcal E$ to $\mathcal E_*$.\\[2mm]
$\mathcal L(\mathcal E)$
& $\mathcal L(\mathcal E,\mathcal E)$.\\[2mm]
$\mathcal X, \mathcal Y, \mathcal Z$
& Normed spaces.\\[2mm]
$\operatorname{Ball}(\mathcal X)$
& $\{x\in\mathcal X:\|x\|<1\}$.\\[2mm]
$\overline{\operatorname{Ball}}(\mathcal X)$
& $\{x\in\mathcal X:\|x\|\leq1\}$.\\[2mm]
$H^\infty(\operatorname{Ball}(\mathcal Z),\mathcal Y)$
& Space of bounded $\mathcal Y$-valued holomorphic functions on
  $\operatorname{Ball}(\mathcal Z)$.\\[2mm]
$\ell^2$
& $\left\{(z_j)_{j= 1}^\infty:z_j\in \C, \displaystyle\sum_{j=1}^{\infty}|z_j|^2<\infty\right\}$.\\[2mm]
$\mathbb D_2^\infty$
& The Hilbert multidisc $\left\{(z_j)_{j= 1}^\infty \in \ell^2 :|z_j|<1 \text{ for every } j
\right\}$.\\[2mm]
$\ell^\infty$
& Space of all bounded sequences of complex numbers.\\[2mm]
$c_0$
& Space of complex sequences converging to zero.\\[2mm]
$c_{00}$
& Space of eventually zero sequences of complex numbers.
\end{tabular}
\end{minipage}
\end{center}
\renewcommand{\thefootnote}{\fnsymbol{footnote}}
\footnotetext{MSC: 47A56, 46G20, 32A19, 47A13, 47A48, 47A57, 32A70, 32E30.\\ Keywords: Operator-valued Schur-Agler class, Agler decomposition, realization formula, Montel's theorem, holomorphy in infinite dimensions.}

\section{Introduction}
A classical result in function theory states that %every Schur class function, that is, 
every bounded holomorphic function $f:\D\to \mathcal L(\mathcal E, \mathcal E_*)$ with $\|f\|_\infty\leq 1$ admits a realization formula. More precisely, there exist a Hilbert space $\mathcal H$ and a unitary colligation matrix $U= \begin{pmatrix}
A & B\\
C& D
\end{pmatrix}: \mathcal E \oplus \mathcal H\rightarrow  \mathcal E_* \oplus\mathcal H$ such that
$$
f(z) = A+ zB(I-zD)^{-1}C, \quad z \in \D.
$$
Over the years, realization formulas have been obtained in 
various settings beyond the open unit disc; see, for example, \cite{AM, AY, BB, BBF, BG, BK-V, BS, DM, Knese}.

A function $f: \mathbb D^n \rightarrow \mathcal L(\mathcal E,\mathcal E_*)$ is said to be in the {\it Schur class} $\mathcal S_n(\mathcal E, \mathcal E_*)$ if $f$ is holomorphic and $\text{sup}_{z \in \mathbb D^n} \|f(z)\| \leq 1.$  When $\mathcal E=\mathcal E_*=\C,$ we write $\mathcal S_n$ for $\mathcal S_n(\mathcal E,\mathcal E_*)$. While every Schur class function on $\D$ or $\D^2$ admits a realization formula, this phenomenon fails on $\D^n$ for $n\geq 3$. To recover a realization theory, one is led to a distinguished subclass of the Schur class, namely the Schur-Agler class, whose elements do admit realization formulas \cite{Agler}. Given $n \in \mathbb N,$ a function $f\in \mathcal S_n(\mathcal E, \mathcal E_*)$ is said to be in the {\it Schur-Agler class} $\mathcal {SA}_n(\mathcal E, \mathcal E_*)$ if for every $n$-tuple $T=(T_j)_{j=1}^n$ of commuting strict contractions, $\|f(T)\| \leq 1.$ Here, $f(T)$ is defined via the tensor functional calculus as
\[
f(T):=\sum_{k\in\mathbb{Z}_+^n} A_k\otimes T^k,
\]
where
\[
f(z)=\sum_{k\in\mathbb{Z}_+^n}A_k z^k
\]
is the power series expansion of $f$. Above, for \(k=(k_1,\ldots,k_n)\in\mathbb{Z}_+^n\), we use the notation
\[
z^k=z_1^{k_1}\cdots z_n^{k_n}
\qquad\text{and}\qquad
T^k=T_1^{k_1}\cdots T_n^{k_n}.
\] 
%It is well known that $\mathcal S_n(\mathcal E,\mathcal E_*)=\mathcal {SA}_n(\mathcal E,\mathcal E_*)$ for $n\leq 2$. For $n>2$, Varopoulos \cite{Var} exhibited counterexamples showing that these two classes do not coincide.  
The Schur-Agler class has been the subject of several recent investigations; see, for example, \cite{BBD, BK, DS, K}. J. Agler \cite{Agler} established an Agler decomposition and a realization formula for functions in this class. This result is central to the developments in the present article. More precisely, he proved the following theorem; see also \cite{BK-V, BT}.

\begin{theorem}
\label{Agler decomposition realization}
Let $f:\D^n \rightarrow \mathcal L(\mathcal E, \mathcal E_*)$ be a function. The following are equivalent.

\begin{itemize}
\item [(1)] $f\in \mathcal {SA}_n(\mathcal E, \mathcal E_*)$.

\item [(2)] For each $1\leq j\leq n$, there exist an auxiliary Hilbert space $\mathcal H_j$ and a holomorphic operator-valued function $T_j: \D^n\rightarrow \mathcal L(\mathcal H_j, \mathcal E_*)$ such that $$I_{\mathcal E_*}-f(z)f(w)^*=\sum_{j=1}^{n}(1-z_j\overline{w_j})T_j(z)T_j(w)^*, \quad z, w \in \mathbb D^n.$$

\item [(3)] For each $1\leq j\leq n$, there exists an $\mathcal L(\mathcal E_*)$-valued kernel $K_j$ on  $\D^n$ such that $$I_{\mathcal E_*}-f(z)f(w)^*=\sum_{j=1}^{n}(1-z_j\overline{w_j})K_j(z, w), \quad z, w \in \mathbb D^n.$$

\item [(4)] There exist auxiliary Hilbert spaces $\mathcal K_j$ $($$1\leq j\leq n$$)$ and a contraction $U= \begin{pmatrix}
    A & B\\
    C& D
\end{pmatrix}: \mathcal E \bigoplus \oplus_{j=1}^{n}\mathcal K_j \rightarrow  \mathcal E_* \bigoplus \oplus_{j=1}^{n} \mathcal K_j $ such that
$$
f(z) = A+ B(I-\Delta(z)D)^{-1}\Delta(z)C, \quad z \in \D^n,
$$ where $\Delta(z)=\sum_{i=1}^{n}z_iP_i.$ Here, $P_i$ denotes the orthogonal projection of $\oplus_{j=1}^{n}\mathcal K_j$ onto $\mathcal K_i$ for each $1\leq i\leq n.$
\end{itemize}
\end{theorem}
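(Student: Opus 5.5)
We will prove the cycle of implications $(1)\Rightarrow(3)\Rightarrow(2)\Rightarrow(4)\Rightarrow(1)$. Two of these are routine. For $(3)\Rightarrow(2)$, factor each positive kernel $K_j$ through its reproducing kernel Hilbert space: take $\mathcal H_j$ to be the RKHS of $K_j$ and $T_j(z)^*e=K_j(\cdot,z)e$, so that $K_j(z,w)=T_j(z)T_j(w)^*$. Holomorphy of $T_j$ is not automatic from (3), so we will either rebuild it from the realization in (4), or arrange it by working with the kernels produced in $(1)\Rightarrow(3)$, which we will construct to be holomorphic in $z$. For $(4)\Rightarrow(1)$, substitute a commuting tuple $T$ of strict contractions into the realization, giving
\[
f(T)=A\otimes I+(B\otimes I)\bigl(I-\Delta(T)(D\otimes I)\bigr)^{-1}\Delta(T)(C\otimes I),
\]
with $\Delta(T)=\sum_i P_i\otimes T_i$ a strict contraction. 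The standard identity expressing $I-f(T)^*f(T)$ as a positive quantity, valid because $U$ is contractive, gives $\|f(T)\|\le1$. One also checks that this agrees with the tensor functional calculus by expanding the Neumann series.

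For $(2)\Rightarrow(4)$ we use the lurking isometry argument. Rewrite the identity in (2) as
\[
I+\sum_j z_j\overline{w_j}\,T_j(z)T_j(w)^*=f(z)f(w)^*+\sum_j T_j(z)T_j(w)^*.
\]
This says that the map
\[
V:\begin{pmatrix} e\\ \oplus_j \overline{w_j}T_j(w)^*e\end{pmatrix}\mapsto\begin{pmatrix} f(w)^*e\\ \oplus_j T_j(w)^*e\end{pmatrix}
\]
is well defined and isometric on the closed span of such vectors in $\mathcal E_*\oplus\bigoplus_j\mathcal H_j$. Extend $V$ by zero on the orthogonal complement to get a contraction (enlarging the $\mathcal H_j$ if one wants a unitary). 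Set $U=V^*$ with blocks $A,B,C,D$, and $\mathcal K_j=\mathcal H_j$. Solving the resulting pair of linear equations for $\oplus_jT_j(w)^*e$ gives $f(w)^*=A^*+C^*\Delta(w)^*(I-D^*\Delta(w)^*)^{-1}B^*$. Taking adjoints yields the formula in (4).

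The main obstacle is $(1)\Rightarrow(3)$, which is Agler's theorem proper. Our plan is a cone separation argument, carried out first on finite sets. Fix a finite set $F\subset\D^n$ and consider the cone $\mathcal C_F$ of $\mathcal L(\mathcal E_*)$-valued kernels on $F$ of the form $\sum_j(1-z_j\overline{w_j})K_j(z,w)$ with $K_j\ge0$. We will show that $\mathcal C_F$ is closed in the weak operator topology on finite matrices. The key estimate is the diagonal bound $\|K_j(z,z)\|\le(1-\max|z_j|^2)^{-1}\|\Gamma(z,z)\|$. If $I-f(z)f(w)^*$ restricted to $F$ were not in $\mathcal C_F$, Hahn--Banach would give a separating functional. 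Via the GNS construction on a Hilbert space of $\mathcal E_*$-valued functions on $F$, this functional produces a commuting tuple of contractions $M_{z_j}$ (multiplication by the coordinates) with $\|f(M_z)^*\|>1$. To make the tuple strict, we shrink $F$ slightly by scaling by $r<1$. This contradicts (1). Finally, we pass from finite sets to all of $\D^n$ by a Kurosh/Kolmogorov-type compactness argument. The diagonal bounds give uniform boundedness of the $K_j^F$ on compacta, so a net over finite sets $F$ has a weak-$*$ (WOT) convergent subnet, and the limit kernels are positive and satisfy (3). The infinite-dimensionality of $\mathcal E_*$ requires care here: in the separation step we will first reduce to finite-dimensional compressions $P\,(\cdot)\,P$ with $P$ a finite-rank projection, and then take limits again using WOT compactness of bounded sets.
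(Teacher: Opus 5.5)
The paper does not prove this theorem. It quotes it from Agler \cite{Agler} (see also \cite{BK-V,BT}), so there is no in-paper argument to compare against. Your route is the standard one: cone separation on finite sets, a compactness passage to $\D^n$, and the lurking isometry. It is also the machinery the paper itself uses later, namely the cone-closedness and Hahn--Banach argument in Theorem~\ref{AdmissibleDecomposition} and the lurking isometry in Lemma~\ref{Decom-to-realization}. Your $(2)\Rightarrow(4)$ and $(4)\Rightarrow(1)$ are fine; to identify the realization with the tensor calculus, group the Neumann series by total degree, so that each homogeneous piece is a finite sum. Two steps of $(1)\Rightarrow(3)$ need more than you wrote.

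\textbf{First, the GNS step.} Suppose your space is $\bigoplus_a\mathcal E_*$ with the semi-inner product $\langle Tg,h\rangle$ given by the separating positive operator $T=(T_{a,b})$, and $M_{z_j}$ are the coordinate multipliers. Then $\|f(M_z)\|\le 1$ amounts to $\bigl[(I_{\mathcal E}-f(x_a)^*f(x_b))\otimes T_{a,b}\bigr]_{a,b}\succeq 0$. This pairs $f$ with $T$ tensorially. It is not the trace pairing $\operatorname{tr}(T\Gamma_f)$, with $\Gamma_f=(I-f(x_a)f(x_b)^*)_{a,b}$, that the separation produces. The mismatch persists when $\dim\mathcal E_*<\infty$, so compressing by a finite-rank $P$ does not cure it.

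Your phrase $\|f(M_z)^*\|>1$ points to the right picture, but the space must be built with a transpose, as in the proof of Theorem~\ref{AdmissibleDecomposition}:
\begin{itemize}
\item $K^t(x_i,x_j)=T_{j,i}^t$ is an admissible kernel.
\item On its RKHS, $\|f(M_z)^*\|\le 1$ is equivalent to $(I-ff^*)\oslash K^t\succeq 0$.
\item $\operatorname{tr}(T\Gamma_f)$ is exactly the quadratic form of $(I-ff^*)\oslash K^t$ at the vector whose entries are all $\sum_k e_k\otimes e_k$.
\end{itemize}
For strictness, scale the operators rather than the set. Each $M_{z_j}$ is annihilated by $\prod_a(\lambda-x_a(j))$, so it has spectral radius $<1$. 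Hence $\sum_k A_k\otimes M_z^k$ converges absolutely and $\|f(M_z)\|=\lim_{r\uparrow 1}\|f(rM_z)\|\le 1$.

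\textbf{Second, holomorphy.} Kernels obtained as pointwise WOT limits along a net of finite sets need not be holomorphic. The paper's standing convention requires the kernels in (3) to be holomorphic in $z$. Under that convention $(3)\Rightarrow(2)$ is actually automatic, since $K_j(z,z)\preceq(1-|z_j|^2)^{-1}I$ makes RKHS evaluation locally bounded. So commit to your first option and reorder the cycle:
\begin{itemize}
\item Factor the non-holomorphic limit kernels through their RKHSs and run the lurking isometry to reach (4). The lurking isometry never uses holomorphy (cf.\ the remark after Lemma~\ref{Decom-to-realization}).
\item From (4), take $T_j(z)=B(I-\Delta(z)D)^{-1}P_j$.
\item Absorb the defect term $G(z)^*G(w)$ coming from $I-UU^*$ into the $j=1$ summand, writing it as $(1-z_1\overline{w_1})\cdot(1-z_1\overline{w_1})^{-1}G(z)^*G(w)$. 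This factors holomorphically through $(z_1^mG(z)^*)_{m\ge 0}$.
\end{itemize}
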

The realization formula provides a powerful framework for proving results such as the Nevanlinna-Pick interpolation theorem \cite{AM}, the commutant lifting theorem \cite{BTV}, and the Carathéodory approximation theorem \cite{ABJK}. In addition, realization-theoretic methods have found applications in engineering; see \cite{G,H}.

G. Knese studied scalar-valued Schur-Agler functions in infinitely many variables (see also \cite{DM}, \cite[Section 7]{AL}). For this class of functions, he obtained an Agler decomposition and a realization formula \cite{Knese} (cf. \cite{DM}). Knese's proof uses the scalar-valued Montel theorem in infinitely many variables; see \cite[Theorem 2.17]{DGMS} for this theorem. Motivated by this work, we introduce operator-valued Schur-Agler functions in infinitely many variables below.
\begin{definition}
A function $f: \text{Ball}(c_{00}) \rightarrow \mathcal L(\mathcal E, \mathcal E_*)$ is said to be in the {\it Schur-Agler class} $\mathcal {SA}_\infty(\mathcal E, \mathcal E_*)$ if $f|_{\mathbb D^n} \in \mathcal {SA}_n(\mathcal E, \mathcal E_*)$ for every $n \in \mathbb N.$  
\end{definition}
Given $f\in \mathcal {SA}_n(\mathcal E, \mathcal E_*)$, one obtains a function $\tilde f\in\mathcal {SA}_\infty(\mathcal E, \mathcal E_*)$ by setting
$$\tilde f\big((z_k)_{k\in\mathbb N}\big):=f(z_1,\ldots,z_n).
$$
Thus, the class ${SA}_\infty(\mathcal E, \mathcal E_*)$ contains, in particular, all finite-variable Schur-Agler functions viewed as functions of infinitely many variables. However, the class $\mathcal{SA}_\infty(\mathcal E,\mathcal E_*)$ also contains functions that genuinely depend on infinitely many variables. An example of such a function is given in Example \ref{exam}.

In this article, we establish an Agler decomposition and a realization formula for functions in $\mathcal {SA}_\infty(\mathcal E, \mathcal E_*)$; see Theorem 
\ref{Auto-Ext}. The passage from the scalar-valued to the operator-valued setting raises two fundamental issues: establishing an appropriate analogue of Montel's theorem and determining a suitable mode of convergence for the Agler decomposition. We first establish an operator-valued analogue of Montel's theorem; see Theorem \ref{Montel}, where the convergence is  obtained in the weak sense. Despite this weaker mode of convergence, we show that the series appearing in the Agler decomposition converges in the strong operator topology (SOT). Moreover, Example \ref{exam} shows that this convergence cannot, in general, be strengthened to norm convergence. Thus, the SOT convergence in the Agler decomposition is sharp.%In particular, the series appearing in the Agler decomposition converges in the strong operator topology. We further show that this convergence cannot, in general, be strengthened to norm convergence, as demonstrated by Example \ref{exam}. Thus, the strong operator topology is sharp for the Agler decomposition in this setting. In the course of this development, we also establish an operator-valued version of Montel's theorem for functions on the unit ball of a separable Banach space; see Proposition \ref{Montel}.

We next turn to the interpolation problem. In the polydisc setting, the Agler-Pick interpolation theorem is well known and takes the following form; see \cite{Agler, A-M}.

\begin{theorem}
% \label{A_P}
Let $X$ be a finite subset of $\D^n$ and $f: X \to \overline{\text{Ball}}(\mathcal L(\mathcal E,\mathcal E_*))$ be a function. Then, the following are equivalent.
\begin{itemize}
\item [(i)] There exists $\tilde{f} \in \mathcal{SA}_n(\mathcal E, \mathcal E_*)$ with $\tilde{f}|_{X} = f$.
\item [(ii)] There exist  $\mathcal L(\mathcal E_*)$-valued kernels $K_j$ $($$1\leq j\leq n$$)$ on $X$ such that $$
I_{\mathcal E_*} - f(z)f(w)^* = \sum_{j=1}^{n} (1-z_j \overline{w_j}) K_j(z, w), \quad z, w \in X.
$$
\end{itemize}
\end{theorem}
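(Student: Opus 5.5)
The implication (i)$\Rightarrow$(ii) is immediate from Theorem \ref{Agler decomposition realization}. If $\tilde f\in\mathcal{SA}_n(\mathcal E,\mathcal E_*)$ extends $f$, then by (1)$\Rightarrow$(3) there are $\mathcal L(\mathcal E_*)$-valued kernels $K_j$ on all of $\D^n$ with
$$I_{\mathcal E_*}-\tilde f(z)\tilde f(w)^*=\sum_{j=1}^n(1-z_j\overline{w_j})K_j(z,w).$$
Restricting the $K_j$ to $X\times X$ keeps them positive, and this gives (ii).

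For (ii)$\Rightarrow$(i) I would use the lurking isometry argument. First factor each kernel $K_j$ on the finite set $X$ as $K_j(z,w)=T_j(z)T_j(w)^*$ with $T_j(z)\in\mathcal L(\mathcal H_j,\mathcal E_*)$. Concretely, take $\mathcal H_j$ to be the Hilbert space obtained from $\mathcal E_*$-valued functions on $X$ with inner product given by $K_j$ (or use a Kolmogorov decomposition). Rearranging (ii) gives
$$I_{\mathcal E_*}+\sum_j z_j\overline{w_j}\,T_j(z)T_j(w)^*=f(z)f(w)^*+\sum_j T_j(z)T_j(w)^*,\qquad z,w\in X.$$
Now define $V$ on the span of the vectors $\bigl(e,\ (\overline{z_j}T_j(z)^*e)_j\bigr)$ in $\mathcal E_*\oplus\bigoplus_j\mathcal H_j$, for $z\in X$ and $e\in\mathcal E_*$, by sending each such vector to $\bigl(f(z)^*e,\ (T_j(z)^*e)_j\bigr)\in\mathcal E\oplus\bigoplus_j\mathcal H_j$. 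The displayed identity says exactly that this map preserves inner products. Hence $V$ is well defined and extends to an isometry from the closure of its domain.

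Next extend $V$ to a contraction on all of $\mathcal E_*\oplus\bigoplus_j\mathcal H_j$ by setting it to be $0$ on the orthogonal complement of the domain. Take adjoints so the colligation points the right way, and write
$$U=V^*=\begin{pmatrix}A&B\\C&D\end{pmatrix}:\mathcal E\oplus\textstyle\bigoplus_j\mathcal H_j\to\mathcal E_*\oplus\textstyle\bigoplus_j\mathcal H_j .$$
Define
$$\tilde f(z)=A+B(I-\Delta(z)D)^{-1}\Delta(z)C,\qquad z\in\D^n,$$
with $\Delta(z)=\sum_j z_jP_j$. By (4)$\Rightarrow$(1) of Theorem \ref{Agler decomposition realization}, $\tilde f\in\mathcal{SA}_n(\mathcal E,\mathcal E_*)$. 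The inverse exists because $\|\Delta(z)D\|<1$.

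It remains to check that $\tilde f=f$ on $X$. For $w\in X$, the defining relations of $V$ read
$$A^*e+C^*\Delta(w)^*h=f(w)^*e,\qquad B^*e+D^*\Delta(w)^*h=h,$$
where $h=(T_j(w)^*e)_j$. Solving the second equation for $h$ and substituting into the first gives
$$f(w)^*=A^*+C^*\Delta(w)^*(I-D^*\Delta(w)^*)^{-1}B^*=\tilde f(w)^*.$$

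The main obstacle is this bookkeeping with adjoints and directions. It has to be set up so that the isometry, once extended, yields exactly a contraction $U$ of the form required in Theorem \ref{Agler decomposition realization}(4); the extension by zero must give a contraction rather than an isometry, which suffices since (4) only asks for a contraction. The finite-dimensionality of $X$ keeps the factorization step elementary, but the spaces $\mathcal H_j$ may still be infinite-dimensional because $\mathcal E_*$ is.
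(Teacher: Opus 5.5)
Your proof is correct. The paper does not prove this finite-variable statement (it is quoted from \cite{Agler, A-M}), but your argument is the same scheme the paper uses for its infinite-variable analogue, Theorem~\ref{A_P_infinite}: for (i)$\Rightarrow$(ii) you restrict the global Agler decomposition of Theorem~\ref{Agler decomposition realization}, and for (ii)$\Rightarrow$(i) you use the lurking isometry of Lemma~\ref{Decom-to-realization}, where your $V$ is the paper's $T$ and your adjoint bookkeeping matches its block form \eqref{T-blockmatrix}.
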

We establish an Agler-Pick interpolation theorem in infinitely many variables; see Theorem~\ref{A_P_infinite}. This provides an interpolation criterion for functions in $\mathcal {SA}_\infty(\mathcal E, \mathcal E_*)$ and extends \cite[Theorem 1.7]{Knese} from the scalar-valued setting to the operator-valued setting.

The article is organized as follows. Section \ref{Pre} establishes an operator-valued version of Montel's theorem for functions on the unit ball of a separable Banach space. In Section \ref{Realization and Decomposition}, we derive an Agler decomposition and a realization formula for functions in $\mathcal{SA}_\infty(\mathcal E,\mathcal E_*)$. Section \ref{Agler-Pick} is devoted to an Agler-Pick interpolation theorem for operator-valued functions in infinitely many variables.

\section{An operator-valued version of Montel's theorem }\label{Pre}
We first collect the necessary material for our purposes. Let $U$ be an open subset of a normed space $\mathcal{X}$. A function $f:U\to \mathcal{L}(\mathcal{E}, \mathcal{E}_*)$ is said to be \emph{G\^ateaux holomorphic} on $U$ if, for every $z\in U$ and $h\in \mathcal X$, the function
\[\Omega:=\{\lambda\in\mathbb{C}:z+\lambda h\in U\}\to \mathcal{L}(\mathcal{E}, \mathcal{E}_*),\qquad
\lambda\mapsto f(z+\lambda h),\]
is  (Fr\'echet) holomorphic on $\Omega$. Given $f: U \to \mathcal{L}(\mathcal{E}, \mathcal{E}_*)$ and vectors $x \in \mathcal{E}, y \in \mathcal{E}_*$, we define the scalar-valued function $f^{x,y}: U \to \mathbb{C}$ by
\begin{equation*}
% \label{f-x-y}
    f^{x,y}(z)=\langle f(z)x, y \rangle_{\mathcal E_*}, \quad z \in U.
\end{equation*}

\begin{proposition}\label{holocheck}
Let $U$ be an open subset of a Banach space $\mathcal X$ and let $f:U \rightarrow \mathcal L(\mathcal E, \mathcal E_*)$ be a locally bounded function. Then, $f$ is holomorphic if and only if for each $x \in \mathcal E$ and $y \in \mathcal E_*$, the function $f^{x,y}$ is holomorphic.
\end{proposition}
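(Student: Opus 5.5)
The forward direction is immediate. If $f$ is holomorphic, then for fixed $x\in\mathcal E$ and $y\in\mathcal E_*$ the map $T\mapsto \langle Tx,y\rangle$ is a bounded linear functional on $\mathcal L(\mathcal E,\mathcal E_*)$. Composing $f$ with it gives a holomorphic function, so $f^{x,y}$ is holomorphic.

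For the converse we use the standard characterization of holomorphy on Banach spaces: a map $U\to \mathcal Y$ into a Banach space is (Fr\'echet) holomorphic if and only if it is G\^ateaux holomorphic and continuous. We may also replace continuity by local boundedness, since a locally bounded G\^ateaux holomorphic map is continuous; this follows from the Cauchy estimates along complex lines. So it suffices to show that $f$ is G\^ateaux holomorphic.

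Fix $z\in U$ and $h\in\mathcal X$, and set $g(\lambda)=f(z+\lambda h)$ on $\Omega$. The function $g$ is locally bounded, and each scalar function $\lambda\mapsto\langle g(\lambda)x,y\rangle$ is holomorphic. We must show that $g$ is holomorphic in operator norm. This is the one-variable fact that a locally bounded, weakly holomorphic operator-valued function is norm holomorphic. I would prove it directly with Cauchy integrals.

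Take a closed disc $\overline{D(\lambda_0,r)}\subset\Omega$ on which $\|g\|\le M$; shrinking $r$ if necessary, compactness and local boundedness give this. Each scalar function $g^{x,y}$ satisfies the Cauchy formula on $\partial D(\lambda_0,r)$. For $\lambda,\mu\in D(\lambda_0,r/2)$ this gives the second-difference estimate
\[
\Bigl|\Bigl\langle \Bigl(\tfrac{g(\lambda)-g(\lambda_0)}{\lambda-\lambda_0}-\tfrac{g(\mu)-g(\lambda_0)}{\mu-\lambda_0}\Bigr)x,y\Bigr\rangle\Bigr|\le C\,M\,|\lambda-\mu|\,\|x\|\,\|y\|,
\]
where $C$ depends only on $r$. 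The key point is that this bound is uniform in $x$ and $y$. Taking the supremum over unit vectors turns it into an operator-norm bound. Hence the difference quotients are Cauchy in norm as $\lambda\to\lambda_0$, and they converge to some $g'(\lambda_0)\in\mathcal L(\mathcal E,\mathcal E_*)$. So $g$ is complex differentiable in norm, i.e. holomorphic.

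The same estimate, with $\lambda_0$ varying, shows that $g$ is norm continuous. Combining this with the local boundedness of $f$ gives continuity of $f$ in the Banach-space sense, and hence Fr\'echet holomorphy. Separability of $\mathcal E$ and $\mathcal E_*$ is not needed.

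The main obstacle is making the uniformity in $(x,y)$ explicit. Scalar holomorphy of each $f^{x,y}$ gives no norm control by itself; the uniform bound $M$ on a disc is exactly what lets the scalar Cauchy estimates pass to the operator norm. Once $f$ is known to be G\^ateaux holomorphic and locally bounded, the final step is citing the standard result on holomorphy in infinite dimensions, as in \cite{DGMS}.
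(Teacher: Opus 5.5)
Your proof is correct and has the same structure as the paper's: you reduce to G\^ateaux holomorphy by showing that each restriction $\lambda\mapsto f(z+\lambda h)$ is norm holomorphic, and then use local boundedness to upgrade to Fr\'echet holomorphy. The only difference is what is proved versus cited: you prove the one-variable weak-to-norm step directly with the uniform Cauchy-integral estimate, where the paper cites \cite[Lemma 3.1]{AKM}, and you finish with the standard criterion that locally bounded G\^ateaux holomorphic maps are holomorphic, where the paper uses the norming-set lemma \cite[Lemma 15.44]{DGMS}.
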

\begin{proof}
The forward implication is immediate.
%It is easy to see that if $f$ is holomorphic, then $f^{x,y}$ is holomorphic for all $x\in \mathcal E$ and $y\in \mathcal E_*.$ 
Conversely, let $f^{x,y}$ be holomorphic for all $x\in \mathcal E$ and $y\in \mathcal E_*.$ We first show that $f$ is G\^ateaux holomorphic. Let $z\in U$ and $h\in \mathcal X.$ Define $\Omega:=\{\lambda \in \C: z+\lambda h \in U\}$. It is clear that $\Omega$ is an open set. Let $g: \Omega \to \mathcal L(\mathcal E, \mathcal E_*)$ be given by $g(\lambda)=f(z+\lambda h)$, $\lambda \in \Omega.$ Let $x\in\mathcal E, y\in \mathcal E_*$ and $\lambda_0\in \Omega.$ Since $f^{x,y}$ is holomorphic at the point $z+\lambda_0h,$ there exists a continuous linear functional $(z+\lambda_0h)^*\in \mathcal X^*$ such that 
\begin{align}\label{holofxy}
\lim_{h^{'} \to 0}\frac{\langle f(z+\lambda_0h+ h^{'}) x-f(z+\lambda_0h)x,y\rangle-(z+\lambda_0h)^*(h^{'})}{\|h^{'}\|}=0.
\end{align}
Now for small enough $\lambda \in  \C,$ we have
\begin{align*}
&\Big|\frac{\langle f(z+\lambda_0h+\lambda h)x-f(z+\lambda_0 h)x,y \rangle- \lambda(z+\lambda_0 h)^{*}(h)}{\lambda}\Big|\\
&=\|h\|\Big|\frac{\langle f(z+\lambda_0h+\lambda h)x-f(z+\lambda_0 h)x,y \rangle- (z+\lambda_0 h)^{*}( \lambda h)}{\|\lambda h\|}\Big|.
\end{align*}
By putting $\lambda h = h^{'}$ and sending $\lambda \to 0,$ we obtain by eq. (\ref{holofxy}) that $(z+\lambda_0 h)^*(h)$ is the differential of $g^{x,y}$ at $\lambda_0.$ Therefore, $g^{x,y}$ is holomorphic on $\Omega$ for all $x\in \mathcal E$ and $y\in \mathcal E_*.$ Hence, by \cite[Lemma 3.1]{AKM}, $g$ is holomorphic on $\Omega.$ This proves that $f$ is G\^ateaux holomorphic. 

Define $$A=\{h\in \mathcal L(\mathcal E,\mathcal E_*)^{*}: h(T)=\langle Tx, y\rangle \  \text{for some}\ x\in \overline{Ball}(\mathcal E), y\in \overline{Ball}(\mathcal E_*) \} .$$ It is immediate that $A$ is contained in the closed unit ball of $\mathcal L(\mathcal E,\mathcal E_*)^{*}$. The proof now follows from \cite[Lemma 15.44]{DGMS}, as $A$ is a norming set for $\mathcal L(\mathcal E,\mathcal E_*)$. 
\end{proof}

We are now ready to state the following operator-valued version of Montel's theorem. To the best of our knowledge, this result has not previously appeared in the literature. The reader is referred to \cite[Theorem 2.17]{DGMS} for the scalar version and \cite[Lemma 3.2]{AKM} for the operator-valued version when domain is a pseudoconvex domain in $\C^n.$ 

\begin{theorem}
[Montel's theorem for operator-valued case]\label{Montel}
Let $\mathcal X$ be a separable Banach space. Let $(f_n)_{n\geq 1}$ be a bounded sequence in $H^\infty(\text{Ball}(\mathcal X), \mathcal L(\mathcal E, \mathcal E_*))$. Then there exist a subsequence $(f_{n_k})_{k\geq 1}$ of $(f_n)_{n\geq 1}$ and a function $f \in H^\infty(\text{Ball}(\mathcal X), \mathcal L(\mathcal E, \mathcal E_*))$ such that for each $x \in \mathcal E$ and $y \in \mathcal E_*,$  the sequence of functions $(f_{n_k}^{x,y})_{k\geq 1}$ converges to $f^{x,y}$ uniformly on each compact subset of $\text{Ball}(\mathcal X).$
\end{theorem}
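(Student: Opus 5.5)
Normalize $\sup_n \|f_n\|_\infty \le M$. The strategy is to apply the scalar Montel theorem \cite[Theorem 2.17]{DGMS} to countably many scalar functions $f_n^{x,y}$, extract a single diagonal subsequence, and then assemble the operator-valued limit through bounded sesquilinear forms. Since $\mathcal E$ and $\mathcal E_*$ are separable, we fix countable dense subsets $\{x_i\}_{i\ge1}\subset\mathcal E$ and $\{y_j\}_{j\ge1}\subset\mathcal E_*$. We may take them to be $\mathbb Q+i\mathbb Q$-linear spans of orthonormal bases, so that they are countable rational vector spaces. For each pair $(i,j)$ the sequence $(f_n^{x_i,y_j})_n$ is bounded by $M\|x_i\|\|y_j\|$ in $H^\infty(\text{Ball}(\mathcal X))$. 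The scalar Montel theorem then gives a subsequence converging uniformly on compact subsets of $\text{Ball}(\mathcal X)$ to a function in $H^\infty(\text{Ball}(\mathcal X))$. Enumerating the countably many pairs and passing to a Cantor diagonal subsequence $(f_{n_k})$, we obtain for every $(i,j)$ a holomorphic limit $g_{ij}$ with $f_{n_k}^{x_i,y_j}\to g_{ij}$ locally uniformly on compacts.

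Next we extend to arbitrary vectors by an $\varepsilon/3$ argument. For $x\in\mathcal E$ and $y\in\mathcal E_*$, choose $x_i, y_j$ close to them. The estimate
\[
|f_{n}^{x,y}(z)-f_{n}^{x_i,y_j}(z)|\le M\bigl(\|x-x_i\|\|y\|+\|x_i\|\|y-y_j\|\bigr)
\]
holds uniformly in $n$ and $z$. Hence $(f_{n_k}^{x,y})_k$ is uniformly Cauchy on each compact set, and it converges to some function $g^{x,y}$. This function is a locally uniform limit of holomorphic functions, so it is holomorphic. Alternatively, it is holomorphic as a uniform limit of the $g_{ij}$ on all of $\text{Ball}(\mathcal X)$, because the estimate above is uniform on the whole ball.

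For fixed $z$, the map $(x,y)\mapsto g^{x,y}(z)$ is the pointwise limit of the bounded sesquilinear forms $\langle f_{n_k}(z)x,y\rangle$. It is therefore sesquilinear and bounded by $M\|x\|\|y\|$. By the Riesz representation for bounded sesquilinear forms, there is a unique $f(z)\in\mathcal L(\mathcal E,\mathcal E_*)$ with $\|f(z)\|\le M$ and $\langle f(z)x,y\rangle=g^{x,y}(z)$. Thus $f$ is bounded, hence locally bounded, and every $f^{x,y}=g^{x,y}$ is holomorphic. Proposition \ref{holocheck} then shows that $f$ is holomorphic, so $f\in H^\infty(\text{Ball}(\mathcal X),\mathcal L(\mathcal E,\mathcal E_*))$, and the required convergence $f_{n_k}^{x,y}\to f^{x,y}$ on compacts has already been established.

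The main point requiring care is the passage from countably many pairs to all pairs $(x,y)$ with a single subsequence. This is exactly why separability of $\mathcal E$ and $\mathcal E_*$ and the uniform bound $M$ are needed. The separability of $\mathcal X$ enters only through the scalar Montel theorem. The holomorphy of the limit is handled by Proposition \ref{holocheck}, which rests on the uniform boundedness of $f$.
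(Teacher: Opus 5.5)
Your proposal is correct and follows essentially the paper's route: scalar Montel plus a Cantor diagonal on countable dense subsets of $\mathcal E$ and $\mathcal E_*$, then extension to all pairs $(x,y)$ using the uniform bound $M$, then construction of $f(z)$ from the bounded limiting sesquilinear form, and finally holomorphy via Proposition~\ref{holocheck}. The one thing to tidy is your first justification that $g^{x,y}$ is holomorphic: in infinite dimensions, uniform convergence on compact sets is not locally uniform convergence. So rely on your alternative argument, which is also what the paper does, namely that $g_{ij}\to g^{x,y}$ uniformly on all of $\text{Ball}(\mathcal X)$.
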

\begin{proof}
Let $A$ and $B$ be countable dense subsets of $\mathcal E$ and $\mathcal E_*$, respectively. By \cite[Theorem 2.17]{DGMS} and  a Cantor diagonal argument, there exist a subsequence $(n_k)_{k\geq 1}$ of $\N$ and functions $f(x,y)\in H^\infty(\text{Ball}(\mathcal X), \C)$ ($x\in A, y\in B$) such that for all $x\in A$ and $y\in B,$ the sequence of functions $(f_{n_k}^{x,y})_{k\geq 1}$ converges to $f(x,y)$ uniformly on each compact subset of $\text{Ball}(\mathcal X).$ Since the sequence $(f_{n_k})_{k\geq 1}$ is uniformly bounded and $A, B$ are dense subsets, we get that  the $\lim_{k\to \infty}f_{n_k}^{x,y}(z)$ exists for all $z\in \text{Ball}(\mathcal X)$ and for all $x\in \mathcal E, y\in \mathcal E_*.$ For  $z\in \text{Ball}(\mathcal X)$, define the sesquilinear form $$u_z:\mathcal E\times \mathcal E_*\to \C$$ by $$u_z(x,y)=\lim_{k\to \infty}\langle f_{n_k}(z)x,y\rangle.$$ Let $M:=\sup\{\|f_n\|_{\infty}: n\geq 1\}.$ Since for any $k\in \N,$ $|\langle f_{n_k}(z)x,y\rangle|\leq M\|x\|\|y\|,$ we have $|u_z(x,y)|\leq M\|x\|\|y\|.$ Therefore, $u_z$ is a bounded sesquilinear form. This implies that there exists a  unique bounded operator $T_z: \mathcal E\to \mathcal E_*$ with $\|T_z\|\leq M$ such that $u_z(x,y)=\langle T_z x,y\rangle$
for all $x\in \mathcal E, y\in \mathcal E_*.$ Define $f: \text{Ball}(\mathcal X)\to \mathcal L(\mathcal E,\mathcal E_*)$ by $$f(z)=T_z.$$ Clearly, $\|f(z)\|\leq M$ for $z\in \text{Ball}(\mathcal X).$ For $x\in A, y \in B,$ since $f^{x,y}=f(x,y)$, we get that $f^{x,y}$ is holomorphic. For $x\in \mathcal E, y \in \mathcal E_*,$ there exist sequences $(x_n)_{n\geq1}\subset A$ and $(y_n)_{n\geq1}\subset B$ such that $(x_n)_{n\geq1}$ converges to $x$ and $(y_n)_{n\geq1}$ converges to $y$. Using the fact that $f$ is bounded, we have that $(f^{x_n,y_n})_{n\geq 1}$ converges to $f^{x,y}$ uniformly on $\text{Ball}(\mathcal X).$ Since $(f^{x_n,y_n})_{n\geq 1}$ is a bounded sequence of holomorphic functions, we get that $f^{x,y}$ is holomorphic by \cite[Theorem 2.13]{DGMS}. Hence, by Proposition \ref{holocheck}, $f$ is holomorphic. 

Next, we show that for each $x \in \mathcal E$ and $y \in \mathcal E_*,$  the sequence of functions $(f_{n_k}^{x,y})_{k\geq 1}$ converges to $f^{x,y}$ uniformly on each compact subset of $\text{Ball}(\mathcal X).$ For $x\in A, y\in B,$ as $f^{x,y}=f(x,y),$ it follows that $(f_{n_k}^{x,y})_{k\geq 1}$ converges to $f^{x,y}$ uniformly on each compact subset of $\text{Ball}(\mathcal X).$ Using this together with the fact that $f$ is bounded and $A, B$ are dense subsets, we get the desired convergence for all $x\in \mathcal E, y\in \mathcal E_*.$ This completes the proof.
\end{proof}

Since a bounded sequence of bounded operators need not have an SOT-convergent subsequence, the convergence in Theorem \ref{Montel} cannot, in general, be strengthened to convergence in the SOT sense.

We record the following form of Hilbert's criterion, which will be useful in Section \ref{Realization and Decomposition}. It follows directly from \cite[Theorem 15.26]{DGMS}.

\begin{proposition}[Hilbert's criterion]\label{Hilbert criterion}
Let $(f_n)_{n=1}^\infty$ be a sequence of functions such that $f_n \in \mathcal S_n(\mathcal E, \mathcal E_*)$ for every $n$, and 
$f_n(z_1, z_2, \ldots, z_m, 0, 0, \ldots) = f_m(z_1, z_2, \ldots, z_m),~ n > m$. Then, there exists a unique holomorphic function $f: \text{Ball}(c_0) \rightarrow \overline {\text{Ball}}(\mathcal L(\mathcal E, \mathcal E_*))$ such that $$f(z_1, z_2, \ldots, z_n, 0, 0, \ldots) = f_n(z_1, z_2, \ldots, z_n), \quad n \geq 1.$$
\end{proposition}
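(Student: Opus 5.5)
The plan is to build $f$ on $\mathrm{Ball}(c_{00})$ directly from the compatible family $(f_n)$, show it is bounded by $1$ and holomorphic there, and then extend it to $\mathrm{Ball}(c_0)$ by density.

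\emph{Definition on $c_{00}$.} For $z\in \mathrm{Ball}(c_{00})$, choose $m$ with $z=(z_1,\ldots,z_m,0,0,\ldots)$ and set $f(z):=f_m(z_1,\ldots,z_m)$. The compatibility hypothesis $f_n(z_1,\ldots,z_m,0,\ldots)=f_m(z_1,\ldots,z_m)$ for $n>m$ makes this independent of the choice of $m$. Moreover $\|f(z)\|\le 1$ on $\mathrm{Ball}(c_{00})$.

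\emph{Scalar reduction.} Fix $x\in\mathcal E$ and $y\in\mathcal E_*$. The scalar functions $f_n^{x,y}$ satisfy
\[
f_n^{x,y}\in H^\infty(\mathbb D^n), \qquad \|f_n^{x,y}\|_\infty\le \|x\|\|y\|,
\]
and they inherit the same compatibility. Hilbert's criterion in the scalar form, \cite[Theorem 15.26]{DGMS}, then gives a unique holomorphic $F^{x,y}$ on $\mathrm{Ball}(c_0)$ that extends $f^{x,y}|_{\mathrm{Ball}(c_{00})}$ and is bounded by $\|x\|\|y\|$.

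\emph{Operator values.} For each $z\in\mathrm{Ball}(c_0)$, the map $(x,y)\mapsto F^{x,y}(z)$ is sesquilinear. This follows from the uniqueness in the scalar criterion, since linear combinations of the $F^{x,y}$ are again holomorphic extensions of the corresponding combinations of the $f^{x,y}$. It is also bounded by $\|x\|\|y\|$. Exactly as in the proof of Theorem \ref{Montel}, this sesquilinear form defines an operator $f(z)\in\overline{\mathrm{Ball}}(\mathcal L(\mathcal E,\mathcal E_*))$ with $f^{x,y}=F^{x,y}$. The resulting $f$ is bounded, hence locally bounded, and every $f^{x,y}$ is holomorphic, so Proposition \ref{holocheck} shows that $f$ is holomorphic on $\mathrm{Ball}(c_0)$ with values in the closed unit ball. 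By construction, $f(z_1,\ldots,z_n,0,\ldots)=f_n(z_1,\ldots,z_n)$.

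\emph{Uniqueness.} If $g$ is another such function, then $g^{x,y}$ and $f^{x,y}$ are holomorphic, hence continuous, on $\mathrm{Ball}(c_0)$. They agree on the dense set $\mathrm{Ball}(c_{00})$, so they agree everywhere. Since this holds for all $x,y$, we get $g=f$.

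\emph{Main point of care.} The main obstacle is justifying the scalar extension step, namely that a bounded holomorphic function on $\mathrm{Ball}(c_{00})$ extends to $\mathrm{Ball}(c_0)$. This is precisely the content of the cited Hilbert criterion. The operator-valued upgrade is then routine, using Proposition \ref{holocheck} together with the weak-to-operator argument already carried out in the proof of Theorem \ref{Montel}.
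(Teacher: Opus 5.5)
Your proof is correct, but it is organized differently from the paper's. The paper gives no argument beyond saying the statement follows directly from \cite[Theorem 15.26]{DGMS}. In other words, it applies Hilbert's criterion to the compatible family $(f_n)$ regarded as functions with values in the Banach space $\mathcal L(\mathcal E,\mathcal E_*)$, so the extension is obtained at the level of operator norms and no weak topology appears.

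You instead scalarize. You apply the criterion to each coefficient family $(f_n^{x,y})_n$ and use its uniqueness part to get sesquilinearity of $(x,y)\mapsto F^{x,y}(z)$. You then rebuild $f(z)$ from the bounded form as in the proof of Theorem~\ref{Montel} and recover norm holomorphy from Proposition~\ref{holocheck}. Uniqueness via continuity and density of $\mathrm{Ball}(c_{00})$ is the same in both routes.

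Your route needs only the scalar criterion and runs parallel to the paper's own proof of the operator-valued Montel theorem. Its cost is the weak-to-norm upgrade, which is exactly Proposition~\ref{holocheck}. That proposition itself rests on the vector-valued norming-set result \cite[Lemma 15.44]{DGMS}, so you do not avoid Banach-space-valued infinite-dimensional holomorphy altogether.

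Finally, the paper's one-line use suggests the theorem you cite is already stated for Banach-space-valued functions. If so, you could apply it directly with target $\mathcal L(\mathcal E,\mathcal E_*)$ and skip the scalar reduction.
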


\section{Realization formula and Agler decomposition}\label{Realization and Decomposition}
In this section, we show that every function in  $\mathcal {SA}_\infty(\mathcal E,\mathcal E_*)$ admits an Agler decomposition and a realization formula. For a nonempty set $X$, a function $K:X\times X\longrightarrow \mathcal L(\mathcal E)$ is called an \emph{$\mathcal L(\mathcal E)$-valued kernel on $X$} if for every finite choice of points
$x_1,\ldots,x_n\in X$ and vectors $h_1,\ldots,h_n\in\mathcal E$,
$$
\sum_{i,j=1}^{n}
\left\langle K(x_i,x_j)h_j,h_i\right\rangle_{\mathcal E} \geq 0.
$$
If $X$ is an open subset of one of the spaces $\mathbb C^d,$ $c_{00},$ $c_0$  or $\ell^\infty$, then, unless stated otherwise, the kernel $K$ is also assumed to be holomorphic in the first variable and anti-holomorphic in the second variable. The following theorem is the main result of this section.
\begin{theorem}\label{Auto-Ext}
Let  $f: \text{Ball}(c_{00})\rightarrow \mathcal L(\mathcal E, \mathcal E_*)$ be a function. Then the following are equivalent.
\begin{itemize}
\item [(1)] $f\in \mathcal {SA}_\infty(\mathcal E,\mathcal E_*)$.
        
\item [(2)] $($Agler decomposition on $\text{Ball}(\ell^\infty)$$)$. $f$ admits a holomorphic extension to $\text{Ball}(\ell^\infty)$ $($also denoted by $f$$)$ for which there exists a sequence $(K_j)_{j=1}^\infty$ of $\mathcal L(\mathcal E_*)$-valued kernels on $\text{Ball}(\ell^\infty)$ such that 
%each $K_j$ is holomorphic in the first variable and anti-holomorhic in the second varaiable, and
$$
I_{\mathcal E_*} - f(z)f(w)^* =\sum_{j=1}^\infty (1-z_j \overline{w_j}) K_j(z, w), \quad z, w \in \text{Ball}(\ell^\infty),
$$
where the series above converges in  SOT.

\item[(3)] $f$ is holomorphic on $\text{Ball}(c_{00})$, and there exists a sequence $(\Gamma_j)_{j=1}^\infty$ of $\mathcal L(\mathcal E_*)$-valued kernels on $\text{Ball}(c_{00})$ such that 
%each $\Gamma_j$ is holomorphic in the first variable and anti-holomorhic in the second varaiable, and
$$
I_{\mathcal E_*} - f(z)f(w)^* = \sum_{j=1}^\infty (1-z_j \overline{w_j}) \Gamma_j(z, w), \quad z, w \in \text{Ball}(c_{00}),
$$
where the series above converges in SOT. 

\item[(4)] $($Realization formula$)$. $f$ possesses  an extension to $\text{Ball}(\ell^\infty)$ $($also denoted by $f$$)$ for which there exist a sequence $(\mathcal H_j)_{j=1}^\infty$ of Hilbert spaces and a contraction $V = \begin{pmatrix}
A & B\\
C& D
\end{pmatrix}: \mathcal E \bigoplus \oplus_{j=1}^\infty \mathcal H_j \rightarrow  \mathcal E_* \bigoplus \oplus_{j=1}^\infty \mathcal H_j $ such that
$$
f(z) = A+ B(I-\Delta(z)D)^{-1}\Delta(z)C, \quad z \in \text{Ball}(\ell^\infty),
$$ where $\Delta(z) = \sum_{j=1}^\infty z_j P_j$ $($SOT$)$, and $P_k$ denotes the orthogonal projection of $\oplus_{j=1}^\infty \mathcal H_j$ onto $\mathcal H_k$.
\end{itemize}
\end{theorem}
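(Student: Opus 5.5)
I will prove the cycle $(1)\Rightarrow(4)\Rightarrow(2)\Rightarrow(3)\Rightarrow(1)$. The heart of the argument is $(1)\Rightarrow(4)$, which I expect to be the main obstacle; the remaining implications are essentially bookkeeping.

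\textbf{Step 1: $(1)\Rightarrow(4)$ via Agler's theorem and Montel's theorem.} For each $n$, the restriction $f|_{\D^n}$ lies in $\mathcal{SA}_n(\mathcal E,\mathcal E_*)$. By Theorem \ref{Agler decomposition realization}, it has a realization with a contraction $V_n=\begin{pmatrix}A_n&B_n\\C_n&D_n\end{pmatrix}$ on $\mathcal E\oplus\bigoplus_{j=1}^n\mathcal K_j^{(n)}$.

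I then define $F_n(z):=A_n+B_n(I-\Delta_n(z)D_n)^{-1}\Delta_n(z)C_n$ for $z\in\text{Ball}(\ell^\infty)$, with $\Delta_n(z)=\sum_{j\le n}z_jP_j$. Since $\|\Delta_n(z)\|\le\|z\|_\infty<1$, the standard estimate shows that $F_n$ is holomorphic on $\text{Ball}(\ell^\infty)$, contractive, and equal to $f$ on $\D^n\subset c_{00}$.

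The difficulty is that Theorem \ref{Montel} needs a separable domain, while $\ell^\infty$ is not separable. I would therefore work at the level of kernels instead. Fix $n$ and set $T^{(n)}_j(z)=B_n(I-\Delta_n(z)D_n)^{-1}P_j$. The standard lurking-isometry computation for a contraction gives
\[
I-F_n(z)F_n(w)^*=\sum_{j\le n}(1-z_j\overline{w_j})T_j^{(n)}(z)T_j^{(n)}(w)^*+\Phi_n(z)\Phi_n(w)^*,
\]
where $\Phi_n(z)$ is a positive-defect term. Choose any $r<1$. On $r\,\overline{\text{Ball}}(\ell^\infty)$ the kernels $K^{(n)}_j(z,w)=T_j^{(n)}(z)T_j^{(n)}(w)^*$ are bounded by $(1-r)^{-2}$.

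Rather than taking limits in the $z$-variable over all of $\ell^\infty$, I would apply Theorem \ref{Montel} on the separable space $c_0$. Concretely, I view $K^{(n)}_j$ as holomorphic in $(z,\bar w)\in\text{Ball}(c_0\oplus c_0)$. Taking a diagonal subsequence in $j$ gives WOT limits $K_j$ on $\text{Ball}(c_0)^2$, which are positive kernels, together with $f$ extended to $\text{Ball}(c_0)$; this extension agrees with the one from Proposition \ref{Hilbert criterion}.

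\textbf{Step 2: passing to the limit and extending to $\ell^\infty$.} Passing to the limit in the identity (which holds WOT for each fixed $j$) gives
\[
I-f(z)f(w)^*-\sum_{j\le N}(1-z_j\overline{w_j})K_j(z,w)\ \ge 0 \quad\text{(as a kernel) for each } N.
\]
The left-hand sum is monotone, so its SOT convergence follows from monotone convergence of positive operators on the diagonal together with a Cauchy–Schwarz polarization.

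To show the remainder vanishes, I would use $z=w$ with finitely supported $z$. There the remainder is $\lim$ of the $\sum_{j>N}$ terms plus the $\Phi$ term. I would handle the $\Phi$ term by using, from Agler's theorem, an isometric (coisometric) colligation so that $\Phi_n=0$.

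Kolmogorov factorization of $K_j$ into $H_j(z)H_j(w)^*$ with $H_j:\text{Ball}(c_0)\to\mathcal L(\mathcal H_j,\mathcal E_*)$ then gives the lurking isometry
\[
\begin{pmatrix}f(w)^*y\\ (\overline{w_j}H_j(w)^*y)_j\end{pmatrix}\mapsto\begin{pmatrix}y\\ (H_j(w)^*y)_j\end{pmatrix}.
\]
Extending this to a contraction $V^*$ yields the realization on $\text{Ball}(c_0)$. The formula itself then defines a holomorphic extension to $\text{Ball}(\ell^\infty)$, since $\|\Delta(z)\|=\|z\|_\infty<1$ and $\Delta(z)$ is an SOT-convergent diagonal sum.

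\textbf{Step 3: the remaining implications.} For $(4)\Rightarrow(2)$, the same computation as in Step 1 gives $I-f(z)f(w)^*=\sum_j(1-z_j\overline{w_j})T_j(z)T_j(w)^*+(\text{positive})$. When $V$ is contractive rather than coisometric, I absorb the positive extra term into $K_1$ after dividing by $1-z_1\overline{w_1}$, using that $1/(1-z_1\overline{w_1})$ is a positive kernel and Schur products preserve positivity. SOT convergence comes from the strong convergence $\sum_j P_j=I$.

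The implication $(2)\Rightarrow(3)$ is just restriction to $\text{Ball}(c_{00})$.

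For $(3)\Rightarrow(1)$, restricting to $\D^n$ gives
\[
I-f(z)f(w)^*=\sum_{j\le n}(1-z_j\overline{w_j})\Gamma_j(z,w)+\sum_{j>n}\Gamma_j(z,w),
\]
since $z_j=w_j=0$ for $j>n$. The tail $\sum_{j>n}\Gamma_j$ is an SOT-convergent sum of positive kernels, hence positive, and I absorb it into the $j=1$ term as above. Theorem \ref{Agler decomposition realization} $(3)\Rightarrow(1)$ then gives $f|_{\D^n}\in\mathcal{SA}_n(\mathcal E,\mathcal E_*)$.
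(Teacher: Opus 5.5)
Steps 1 and 3 work. The bound $\|K_j^{(n)}(z,w)\|\le (1-r)^{-2}$ from the realization is a legitimate way to feed Theorem \ref{Montel} on $\mathrm{Ball}(c_0\oplus c_0)$, and the chain $(4)\Rightarrow(2)\Rightarrow(3)\Rightarrow(1)$ is fine. Step 2, however, has a genuine gap, located exactly where the paper does its real work.

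\textbf{The per-$N$ kernel inequality is false.} The inequality $I-ff^*-\sum_{j\le N}(1-z_j\overline{w_j})K_j\succeq 0$ as a kernel does not follow by passing to the limit. The discarded tail $\sum_{N<j\le n}(1-z_j\overline{w_j})K_j^{(n)}$ is not a positive kernel, because $1-z_j\overline{w_j}$ is not positive definite. The inequality actually fails: take $f(z)=z_2$, realized by the unitary colligation $A=D=0$, $B=C=1$ on $\C\oplus\mathcal H_2$ with $\mathcal H_2=\C$. Then $K_2=1$, and for $N=1$ the left side is $1-z_2\overline{w_2}$, which is not positive on the two points $0$ and $\tfrac12 e_2$. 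What survives is only the one-point operator inequality (enough for your monotone convergence) and the kernel inequality on points supported in $\{1,\dots,N\}$.

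\textbf{The remainder need not vanish.} This is the decisive problem: $R=I-ff^*-\sum_j(1-z_j\overline{w_j})K_j$ can be nonzero even for coisometric colligations, because Agler kernels can escape to $j=\infty$. For $f\equiv 0$, use for each $n$ the decomposition $1=(1-z_n\overline{w_n})(1-z_n\overline{w_n})^{-1}$. It comes from a unitary colligation on $\C\oplus H^2$, obtained by extending the lurking isometry $(1,\overline{w_n}k_{w_n})\mapsto(0,k_{w_n})$, where $k_w$ is the Szegő kernel. Every WOT limit $K_j$ is then $0$, so $R\equiv 1$. Your argument tacitly interchanges $\lim_N$ and $\lim_n$ in the tails, and that interchange is what fails.

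\textbf{What is true and how to fix it.} What is true, and what the paper proves, is that $R$ is a positive kernel. It is then absorbed into the first term as $R/(1-z_1\overline{w_1})$ (the paper's $K_0$), so no coisometry is needed. Proving $R\succeq 0$ at points that are not finitely supported requires an estimate of $\sum_{j>M}z_j\overline{w_j}K_j^{(n)}$ that is uniform in $n$. The paper gets this on $\D_\rho$ with $\rho\in\ell^2$, from the kernel domination $K_j^N\preceq S$ together with \eqref{ineq2}--\eqref{ineq4}; that is why it works on $\D_\rho$ rather than on $\mathrm{Ball}(c_0)$, and you supply no substitute.

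In your setting the cheapest repair is to stay on $X=\mathrm{Ball}(c_{00})$. For points supported in $\{1,\dots,M\}$ and $M\le N\le n$, the tail equals $\sum_{N<j\le n}K_j^{(n)}(z,w)+\Phi_n(z)\Phi_n(w)^*$, which is a positive kernel. Letting $n\to\infty$ and then $N\to\infty$ gives $R\succeq 0$ on $\mathrm{Ball}(c_{00})$. Moreover $\sum_{j>M}K_j(z,z)\preceq I$ on $\D^M$, so \eqref{abso} holds there by Cauchy--Schwarz. Lemma \ref{Decom-to-realization} with $X=\mathrm{Ball}(c_{00})$ then gives the realization, and with it the extension to $\mathrm{Ball}(\ell^\infty)$.
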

%\textcolor{red}{If possible, give exmaple of $f\in \mathcal{SA}_\infty(\mathcal E,\mathcal E_*)$ for which, we cannot have Alger decomposition series convergence in norm.}
\begin{remark}%\textcolor{red}{uniqueness}
Note that, by Proposition \ref{Hilbert criterion}, every $f\in \mathcal {SA}_\infty(\mathcal E,\mathcal E_*)$ admits a unique holomorphic extension to $\text{Ball}(c_0).$ Theorem \ref{Auto-Ext} shows that $f$ extends holomorphically to the larger domain $\text{Ball}(\ell^\infty).$ However, such an extension need not be unique. For example, by the Hahn--Banach extension theorem, the zero function on $\text{Ball}(c_{00})$  admits distinct holomorphic extensions to $\text{Ball}(\ell^\infty).$
\end{remark}

We next show by an example that the SOT convergence in conditions (2) and (3) of Theorem \ref{Auto-Ext} cannot, in general, be replaced by norm convergence.

\begin{example}\label{exam}
 Let $f : \mathrm{Ball}(c_{00}) \longrightarrow \mathcal L\left(\bigoplus_{j=1}^{\infty} \mathcal H_j\right)$ be defined by
\[
f(z) = \sum_{j=1}^{\infty} z_j P_j \, ,
\]
where $P_k\in \mathcal L(\bigoplus_{j=1}^{\infty} \mathcal H_j)$ is the orthogonal projection onto $\mathcal H_k$. For any $n \in \mathbb N,$ note that
\begin{align*}
I_{\oplus \mathcal H_j} - \Big(\sum_{j=1}^n z_j P_j\Big) \Big(\sum_{j=1}^n \overline{w}_j P_j\Big)
&= I_{\oplus \mathcal H_j} - \Big(\sum_{j=1}^n z_j \overline{w}_j P_j\Big) \\
&= \left(\sum_{i=n+1}^{\infty} P_i\right) + \sum_{j=1}^n (1 - z_j \overline{w}_j) P_j\\
&= (1 - z_1 \overline{w}_1)\Big(\frac{\left(\sum_{i=n+1}^{\infty} P_i\right)}{1-z_1 \overline{w_1}} + P_1\Big) + \sum_{j=2}^n (1 - z_j \overline{w}_j) P_j.
\end{align*}
Thus, by Theorem~\ref{Agler decomposition realization}, $f|_{\mathbb D^n}$ is in Schur-Agler class for every $n$, and hence we have $f \in \mathcal {SA}_\infty(\bigoplus_{j=1}^{\infty} \mathcal H_j, \bigoplus_{j=1}^{\infty} \mathcal H_j)$.
We now claim that there does not exist a sequence $(\Gamma_j)_{j=1}^\infty$ of kernels such that
\begin{equation}\label{Normsum}
I - f(z) f(w)^* \;=\; \sum_{j=1}^\infty (1 - z_j \overline{w}_j) P_j ~~\text{(SOT)}~ \;=\; \sum_{j=1}^\infty (1 - z_j \overline{w}_j)\, \Gamma_j(z, w) ~~\text{(Norm)}, \quad z, w \in \text{Ball}(c_{00}). 
\end{equation}
Suppose, to the contrary, that such a sequence $(\Gamma_j)_{j=1}^\infty$ exists. Then, letting $z = w = 0$ gives that
\begin{equation}\label{at0}
I = \sum_{j=1}^\infty P_j ~~\text{(SOT)}~ = \sum_{j =1}^\infty \Gamma_j(0,0)~~\text{(Norm)}~.
\end{equation}
Fix $k\in \N.$ Let $e_k$ denote the sequence whose $k^{\text{th}}$ entry is $1$ and whose other entries are $0$. Then, for $z\in \D,$ we get 
\begin{equation*}
\sum_{j\neq k}P_j + (1-|z|^2)P_k= \sum_{j \neq k} \Gamma_j(ze_k,z e_k) ~~\text{(Norm)}~+ (1 - |z|^2 )\, \Gamma_k(ze_k,z e_k)
\end{equation*}
This implies that 
\begin{align*}
\sum_{j \neq k} \Gamma_j(ze_k,z e_k) ~~\text{(Norm)}~+ (1 - |z|^2 )\, \Gamma_k(ze_k,z e_k)\preceq I_{\bigoplus_{j=1}^{\infty} \mathcal H_j},
\end{align*}
and hence, 
\begin{equation}\label{diag}
\sum_{j \neq k} \Gamma_j(ze_k,z e_k) ~~\text{(Norm)}\preceq I_{\bigoplus_{j=1}^{\infty} \mathcal H_j}.\\[2mm]
\end{equation}
Let $z,w \in \D$. Then, by Eq. (\ref{Normsum}), $\sum_{j \neq k} \Gamma_j(ze_k,w e_k) ~~\text{(Norm)}$ exists. Let $E_k:=\{ze_k:z\in \D\}.$
Clearly, $\sum_{j \neq k} \Gamma_j$ is a kernel on $E_k\times E_k.$ Now, using Eq. (\ref{diag}), we get $\|\sum_{j \neq k} \Gamma_j(ze_k,w e_k)\|\leq 1$ for all $z,w\in \D.$ Similarly, for all $l\in \N,$ the kernel $\sum\limits_{\substack{j=1 \\j\neq k}}^{l} \Gamma_j$ satisfies 
\begin{equation}\label{finitesum}\Big\|\sum_{\substack{j=1 \\j\neq k}}^{l} \Gamma_j(ze_k,w e_k)\Big\|\leq 1
\end{equation}
for all $z,w\in \D.$ For $l\in \N,$ define $G,G_l:\D^2\rightarrow \mathcal L(\bigoplus_{j=1}^{\infty} \mathcal H_j)$ by
$$G_l(z,w):=\sum_{j=1,j\neq k}^{l} \Gamma_j(ze_k,\overline{w}e_k),\ \ \  G(z,w):=\sum_{j\neq k}\Gamma_j(ze_k,\overline{w}e_k).$$
By Eq. (\ref{finitesum}), $G_l$ is a uniformly bounded sequence of  holomorphic functions. 
Therefore, by \cite[Theorem 6.1]{Thuan}, the sequence $(G_l)_{l\in \N}$ converges uniformly on each compact subset of $\D^2$ to the holomorphic function $G.$ Using this uniform convergence, \cite[Proposition 15.5]{DGMS}, and the power series expansion
$$
\Gamma_j(ze_k,we_k)=\sum_{m,n\geq0} C_{j,m,n}z^m\overline{w}^{\,n},
$$
Eq. \eqref{Normsum} gives, for $z,w\in\D$, 
\begin{align*}
I_{\bigoplus_{j=1}^{\infty} \mathcal H_j} - z \overline{w} P_k
&= \sum_{j \neq k} \Gamma_j(ze_k, w e_k) ~~\text{(Norm)}~+ (1 - z \overline{w})\, \Gamma_k(ze_k, w e_k) \\[2mm]
&= \sum_{j \neq k} \Big( \sum_{m,n \geq 0} C_{j,m,n}  z^m \overline{w}^n \Big)
+ (1- z \overline{w}) \sum_{m,n \geq 0} C_{k,m,n} z^m \overline{w}^n
\\[2mm]
&=\sum_{m,n \geq 0} (\sum_{j \neq k} C_{j,m,n}) z^m \overline{w}^n
+ (1- z \overline{w}) \sum_{m,n \geq 0} C_{k,m,n} z^m \overline{w}^n.
\end{align*}
By the uniqueness of the coefficients of $z \overline{w}$, it follows that
\[ \Gamma_k(0,0) = P_k + \sum_{j=1}^{\infty} C_{j,1,1}.
\]
% \[
% \Rightarrow \quad  - P_k = \sum_{j=1}^{\infty} C_{j,1,1}
% \]
% \[
% \Rightarrow \quad \Gamma_k(0,0) \geq P_k \qquad \forall k
% \]
Since $\Gamma_j$ is a kernel, we have $C_{j,1,1}\succeq 0$ for $j\in \N.$ Therefore, $\Gamma_k(0,0) \succeq P_k$ for all $k\in \N.$ Now, by Eq. (\ref{at0}),
we finally get that $\Gamma_j(0,0) = P_j$ for all $j \geq 1$.
This is a contradiction.
\end{example}

Inspired by the ideas presented in \cite{Knese}, 
we establish a couple of lemmas and provide a proof of Theorem \ref{Auto-Ext}. Here, we introduce some useful notation. 
For a fixed sequence $\rho = (\rho_j)_{j = 1}^\infty \in \mathbb D_2^\infty$ of positive numbers, define $\mathbb D_\rho = \{z \odot \rho : z \in Ball(c_{0})\}$, where $z \odot \rho = (z_j\rho_j)_{j=1}^\infty$ for $z = (z_j)_{j=1}^\infty.$ Let $S, S_n:\D_2^{\infty}\times \D_2^{\infty}\to \mathcal L(\mathcal E_*)~(n \geq 1)$ be defined by 
\begin{equation}
\label{S,Sn}
    S(z,w)=\prod_{j=1}^{\infty}\frac{1}{1-z_j\overline{w}_j} I_{\mathcal E_*}, ~~ S_n(z,w)=\prod_{j\neq n}\frac{1}{1-z_j\overline{w}_j} I_{\mathcal E_*},~ z=(z_j)_{j=1}^\infty, w =(w_j)_{j=1}^\infty \in \mathbb D_2^\infty.
\end{equation}
Let $I$ denote the  constant function $I_{\mathcal E_*}$ on $\D_2^{\infty}\times \D_2^{\infty}.$
Again, we recall that, by Proposition~\ref{Hilbert criterion}, every
$f\in \mathcal{SA}_\infty(\mathcal E,\mathcal E_*)$ admits a unique holomorphic extension to $\text{Ball}(c_0)$.  
\begin{lemma}
\label{Realization-formula}
Let $f \in \mathcal {SA}_\infty(\mathcal E, \mathcal E_*)$. Then, there exists a sequence of $\mathcal L(\mathcal E_*)$-valued kernels $(K_j)_{j=1}^\infty$ on $\mathbb D_\rho$ such that, for each $x, y \in \mathcal E_*$ and $z,w \in \D_\rho,$
\begin{align*}
\sum_{j=1}^\infty |\langle{K_j(z, w)x, y}\rangle| < \infty,
\end{align*} and
$$
I_{\mathcal E_*} - f(z)f(w)^* =\sum_{j=1}^\infty (1-z_j \overline{w_j}) K_j(z, w), \quad z, w \in \mathbb D_\rho,  
$$
where the above series converges in the weak operator topology (WOT).
\end{lemma}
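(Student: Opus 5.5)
We will follow Knese's scalar strategy: pass from the finite-variable Agler decompositions on $\D^n$ to a limit using the operator-valued Montel theorem (Theorem \ref{Montel}). The kernels $S$ and $S_n$ from \eqref{S,Sn} will supply the normalization that keeps the finite decompositions uniformly bounded on $\D_\rho$.

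\emph{Step 1 (finite decompositions).} For each $n$, $f|_{\D^n}\in\mathcal{SA}_n(\mathcal E,\mathcal E_*)$. Theorem \ref{Agler decomposition realization} then gives kernels $K^{(n)}_1,\dots,K^{(n)}_n$ on $\D^n$ with
$$I_{\mathcal E_*}-f(z)f(w)^*=\sum_{j=1}^n(1-z_j\overline{w_j})K^{(n)}_j(z,w),\qquad z,w\in\D^n.$$
We regard these as functions on $\D_\rho$ via $z\mapsto(z_1,\dots,z_n)$, and set $K^{(n)}_j=0$ for $j>n$.

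\emph{Step 2 (uniform bounds).} This is the main obstacle: the individual $K^{(n)}_j(z,z)$ are not a priori bounded uniformly in $n$. We will use the trick that, for $z\in\D^n$,
$$\sum_{j=1}^n(1-|z_j|^2)K^{(n)}_j(z,z)\preceq I,$$
and that multiplying the decomposition by the positive kernel $S$ gives a Schur product identity. Concretely, we will show $K^{(n)}_j(z,z)\preceq S_j(z,z)$, and more precisely
$$\sum_j (1-|z_j|^2)\,\frac{K^{(n)}_j(z,z)}{\cdots}\ \text{controlled by}\ S(z,z)$$
(from $\sum_j(1-|z_j|^2)K_j\preceq I$ one gets $K^{(n)}_j(z,z)\preceq(1-|z_j|^2)^{-1}I$). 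Summing over $j$ is not directly bounded this way. To control $\sum_j\|K^{(n)}_j(z,z)\|$ we will instead evaluate at the point $z^{(j)}$ obtained from $z$ by replacing $z_j$ with $0$, and use the standard Agler-type argument: the one-variable slice $\lambda\mapsto K_j^{(n)}$ combined with the identity gives $K_j^{(n)}(z,z)\preceq \prod_{i\neq j}(1-|z_i|^2)^{-1}\,(\text{something summable})$. On $\D_\rho$ we have $|z_j|\le\rho_j$ with $\rho\in\ell^2$, so $S(z,z)\le\prod(1-\rho_j^2)^{-1}<\infty$. This yields bounds of the form $\sum_j\operatorname{tr}$-free estimates $\sum_j|\langle K_j^{(n)}(z,z)x,x\rangle|\le C_\rho\|x\|^2$, uniformly in $n$. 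Off the diagonal, Cauchy--Schwarz for positive kernels will give $|\langle K_j(z,w)x,y\rangle|\le\langle K_j(z,z)y,y\rangle^{1/2}\langle K_j(w,w)x,x\rangle^{1/2}$, and a further Cauchy--Schwarz in $j$ gives the summability claimed in the lemma. If the direct slice estimate fails, we will fall back on Knese's device of replacing $K_j$ by $S_j$-weighted kernels, bounding $\sum_j(1-|z_j|^2)$-type sums by $S(z,z)-1$.

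\emph{Step 3 (limits).} We view $\D_\rho$ as the image of $\text{Ball}(c_0)$ (a separable Banach space ball) under $z\mapsto z\odot\rho$, and treat each $K^{(n)}_j$ as a bounded holomorphic function of $(z,\overline w)$ on $\text{Ball}(c_0\oplus c_0)$. We apply Theorem \ref{Montel} and a diagonal argument over $j$ to extract a single subsequence $n_k$ such that, for every $j$, $K^{(n_k)}_j\to K_j$ weakly and pointwise. Positivity of the kernels passes to weak limits, and so does holomorphy (anti-holomorphy in the second variable).

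\emph{Step 4 (passing the identity to the limit).} Fix $x,y$ and $z,w$. By Step 1, $\langle(I-f(z)f(w)^*)x,y\rangle$ equals the $n_k$-th finite sum, where $f$ is approximated by $f(z_1,\dots,z_n,0,\dots)$, which converges to $f(z)$ by Proposition \ref{Hilbert criterion}. The uniform summable bound from Step 2 is, via Cauchy--Schwarz, a tail bound uniform in $k$. Dominated convergence in $j$ then gives $\sum_j(1-z_j\overline{w_j})\langle K_j(z,w)x,y\rangle$ with absolute convergence, which is exactly the WOT statement of the lemma.
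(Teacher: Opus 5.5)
\textbf{The gap is in Step 4.} A bound $\sum_j\langle K^{(n)}_j(z,z)x,x\rangle\le C_\rho\|x\|^2$ that is uniform in $n$ controls the total mass. It does not control the tails $\sum_{j>M}$ uniformly in $n$. So there is no summable dominating sequence, and the identity need not survive the limit, because mass can escape to $j=\infty$.

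Concretely, take scalar $f\equiv 0$ with the legitimate finite decompositions $K^{(n)}_n(z,w)=(1-z_n\overline{w_n})^{-1}$ and $K^{(n)}_j=0$ for $j<n$. Your Step 2 bound holds, since $\sum_j K^{(n)}_j(z,z)=(1-|z_n|^2)^{-1}\le S(\rho,\rho)$ on $\D_\rho$. Yet for each fixed $j$ the limit along any subsequence is $K_j=0$, so $\sum_j(1-z_j\overline{w_j})K_j=0\neq 1=I-f(z)f(w)^*$. Weak limits together with the uniform bound give only absolute summability of $\sum_j\langle K_j(z,w)x,y\rangle$ (via $\sum_{j\le M}K_j\preceq S$ and Cauchy--Schwarz), not the decomposition itself.

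\textbf{The missing idea} is to carry only an inequality to the limit and then repair the defect. With $f_n(z)=f(z_1,\ldots,z_n,0,\ldots)$ and $M<n$, one has
\[
I-f_n\otimes f_n^*-\sum_{j=1}^{M}(I-Z_j\otimes Z_j^*)K^{(n)}_j+\sum_{j=M+1}^{n}(Z_j\otimes Z_j^*)S\succeq\sum_{j=M+1}^{n}K^{(n)}_j\succeq 0,
\]
since the difference of the first two expressions is $\sum_{j=M+1}^{n}(Z_j\otimes Z_j^*)(S-K^{(n)}_j)\succeq 0$. At points of $\D_\rho$, the quadratic forms of $\sum_{j=M+1}^{n}(Z_j\otimes Z_j^*)S$ are bounded by $\|S(\rho,\rho)\|\big(\sum_i\|h_i\|\big)^2\sum_{r>M}\rho_r^2$, uniformly in $n$; this is where $\rho\in\ell^2$ genuinely enters. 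Let $n\to\infty$ (only finitely many weakly convergent terms remain) and then $M\to\infty$. This shows $K_0:=I-f\otimes f^*-\sum_j(I-Z_j\otimes Z_j^*)K_j\succeq 0$ on $\D_\rho$. Now replace $K_1$ by $K_1+K_0/(1-z_1\overline{w_1})$, which is a kernel as a Schur product with the positive scalar kernel $(1-z_1\overline{w_1})^{-1}$. This yields the stated equality.

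\textbf{Step 2 also needs fixing.} It should simply read: Schur-multiply the finite identity by $S$, and use $(I-Z_j\otimes Z_j^*)S=S_j\succeq I$ and $(f_n\otimes f_n^*)S\succeq 0$, to get $\sum_{j=1}^{n}K^{(n)}_j\preceq S$ as kernels. With Cauchy--Schwarz, this gives the uniform operator bound that Montel's theorem needs. The slice argument is unnecessary. Your intermediate claim $K^{(n)}_j(z,z)\preceq S_j(z,z)$ is false: for $f\equiv 0$ in one variable, $K_1(z,z)=(1-|z_1|^2)^{-1}>1=S_1(z,z)$ at $z=(z_1,0,0,\ldots)$ with $z_1\neq 0$.
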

\begin{proof}
For each $N \geq 1$, define $f_N:\text{Ball}(c_0)\to \mathcal L(\mathcal E,\mathcal E_*)$  by 
$$f_N(z_1,z_2,\ldots)=f(z_1,z_2,\dots,z_N,0,0,\ldots).$$ Since $f|_{\D^N} \in \mathcal {SA}_n(\mathcal E,\mathcal E_*)$, by Theorem~\ref{Agler decomposition realization}, 
\begin{equation}
\label{decomposition-polydisc}
    I_{\mathcal E_*}- f_N(z)f_N(w)^*=\sum_{j=1}^{N} (1-z_j\overline{w_j}) K_j^N(z,w), \quad z = (z_j)_{j = 1}^\infty, w = (w_j)_{j=  1}^\infty \in \text{Ball}(c_0)
\end{equation}
for some $\mathcal L(\mathcal E_*)$-valued kernels $K_j^N~(j=1,2,\ldots,N)$ that depend only on the first $N$ variables.
%$z_1,\ldots,z_N,w_1,\ldots,w_N,$ are holomorphic in $z,$ and anti-holomorphic in $w$. 

Let $S, S_n$ be kernels defined as in \eqref{S,Sn}. Then, for all $n \geq \N$, $S,S_n\succeq I$ on $\D_2^{\infty}.$ Define $f_N\otimes f_N^*(z,w)=f_N(z)f_N(w)^*$ for $z,w\in \D_2^{\infty}$. Then, for any $M<N$, 
\begin{equation}
   \label{S-relation-partial sum} 
S\succeq (I-f_N\otimes f_N^*)S \overset{\eqref{decomposition-polydisc}}=\sum_{j=1}^{N}S_jK_j^N\succeq \sum_{j=1}^{N} K_j^N\succeq \sum_{j=1}^{M} K_j^N \quad \text{on $\D_2^\infty$}.
\end{equation} Therefore, by the Cauchy-Schwarz inequality, for any $x,y \in \mathcal E_*$,
\begin{align*}
|\langle K_j^N(z,w)x,y\rangle|^2=|\langle K_j^N(\cdot,w)x, K_j^N(\cdot,z)y\rangle|^2
&\leq \|K_j^N(\cdot,w)x\|^2 \|K_j^N(\cdot,z)y\|^2\\
&\leq \langle S(w,w)x,x\rangle \langle S(z,z)y,y\rangle\\
&=\|S(w,w)^\frac{1}{2}x\|^2 \|S(z,z)^\frac{1}{2}y\|^2, \quad z,w\in \D_2^{\infty},
\end{align*}
and hence, $\|K_j^N(z,w)\|\leq \|S(z,z)^\frac{1}{2}\| \|S(w,w)^\frac{1}{2}\|, ~z,w \in \D_2^{\infty}$,  $N\in \N,\  j\leq N$. Fix $\rho = (\rho_j)_{j = 1}^\infty \in \mathbb D_2^\infty$. Since $S(\rho \odot z, \rho \odot z)\preceq S(\rho, \rho)$, $z\in \text{Ball} (c_0),$ it follows for any $N\in \N$ that  $$\|K_j^N(\rho \odot z, \rho \odot \overline{w})\|\leq \|S(\rho, \rho)^\frac{1}{2}\|^2, \quad z,w \in \text{Ball} (c_0), 1\leq j\leq N. 
$$
Since $K_j^N$ depends only on the first $N$ variables, it follows from \cite[Remark 15.11]{DGMS} that the function $$\text{Ball}(c_0\times c_0)\ni (z,w)\rightarrow K_j^N(\rho \odot z, \rho \odot \overline{w})$$ is a holomorphic function.
Therefore, by Theorem \ref{Montel}, for each $j\in \N$, there exist a subsequence $N_{j,k}$ of $\mathbb N$ and a bounded function $K_j$ on $\D_\rho \times \D_\rho$ such that for all $x,y \in \mathcal E_*,$ $$\langle K_j^{N_{j,k}}(\rho \odot z, \rho \odot \overline{w})x,y\rangle \overset{k \rightarrow \infty}\longrightarrow \langle K_j (\rho \odot z, \rho \odot \overline{w})x,y\rangle$$ uniformly on each compact subset of $\text{Ball}(c_0\times c_0)$. Consequently, $K_j$ is a kernel. Now, by using a standard diagonal argument, we can find a common subsequence of $\mathbb N$, say the subsequence is $\mathbb N$ (without loss of generality), such that  for each $j \in \mathbb N$ and $x,y \in \mathcal E_*,$ \begin{align}\label{weaklyuniformly}\langle K_j^{N}(\rho \odot z, \rho \odot \overline{w})x,y\rangle \overset{N \rightarrow \infty}{\longrightarrow} \langle K_j (\rho \odot z, \rho \odot \overline{w})x,y\rangle
\end{align}
uniformly on each compact subsets of $\text{Ball}(c_0\times c_0)$. Thus, by letting $N\to \infty$ in \eqref{S-relation-partial sum}, we obtain that $S\succeq \sum_{j=1}^{M} K_j$ on $\D_\rho$ for all $M\in \N.$  In particular, for any $z\in \D_\rho,$ $S(z,z)\succeq \sum_{j=1}^{M} K_j(z,z),$ and since $(\sum_{j=1}^{M} K_j(z,z))_{M= 1}^\infty$ is an increasing sequence of positive operators bounded above by $S(z,z),$ it follows that $\sum_{j=1}^{\infty} K_j(z,z)$ exists in SOT. Hence, for any $x\in \mathcal E_*,$
\begin{align*}
\sum_{j=1}^{\infty} \|K_j(\cdot,z)x\|^2 =\sum_{j=1}^{\infty} \langle K_j(z,z)x,x\rangle
\leq \langle S(z,z)x,x\rangle
=\|{S(z,z)}^{\frac{1}{2}}x\|^2, \quad w\in \D_\rho.
\end{align*}
Therefore, for any $z,w\in \D_\rho$ and any $x,y \in \mathcal E_*,$ by the Cauchy-Schwarz inequality,  
\begin{align} \label{bound}
\nonumber 
\sum_{j=1}^{\infty} |\langle K_j(z,w)x,y\rangle|&\leq \sum_{j=1}^{\infty} \|K_j(\cdot,w)x\| \|K_j(\cdot,z)y\|\\ 
\nonumber &\leq \Big(\sum_{j=1}^{\infty} \|K_j(\cdot,w)x\|^2\Big)^{\frac{1}{2}}
\Big(\sum_{j=1}^{\infty} \|K_j(\cdot,z)y\|^2\Big)^{\frac{1}{2}}\\ 
&\leq \|{S(w,w)}^{\frac{1}{2}}x\| \|{S(z,z)}^{\frac{1}{2}}y\|. 
\end{align}
For $z,w \in \D_\rho,$ define the sesquilinear form $u(z,w): \mathcal E_*\times \mathcal E_* \to \C$ by $$u(z,w)(x,y):=\sum_{j=1}^{\infty}(1-z_j\overline{w}_j)\langle K_j(z,w)x,y\rangle, \quad x, y \in \mathcal E_*.$$ 
By Eq. \eqref{bound}, this form is well-defined and bounded. Hence, $\sum_{j=1}^{\infty} (1-z_j\overline{w}_j)K_j(z,w)$ exists in WOT for any $z,w \in \D_\rho$. Similarly, $\sum_{j=1}^{\infty} K_j(z,w)$ exists in WOT for any $z,w \in \D_\rho$.

By Proposition \ref{Hilbert criterion}, the function $f$ has a unique holomorphic extension $g$ to $\text{Ball}(c_0).$ Without abuse of notation, we call $g|_{\D_\rho}$ as $f.$ For $z,w \in \D_\rho,$ define $$K_0(z,w):=I_{\mathcal E_*}-f(z)f(w)^{*}- \sum_{j=1}^{\infty} (1-z_j\overline{w}_j) K_j(z,w).$$
It remains to verify that $K_0$ is a kernel. Let $Z_j\otimes Z_j^*$ be defined by $Z_j\otimes Z_j^*(z,w)=z_j\overline{w_j}$ for $z,w \in \D_2^{\infty}.$ First, note that, since $S\succeq K_j^N$ on $\D_2^\infty,$ it follows from Eq. (\ref{decomposition-polydisc}) that, for $M<N,$ 
\begin{align}\label{ineq2}
I -f_N\otimes {f^*_N}-\sum _{j=1}^{M}(I-Z_j\otimes Z_j^*)K_j^N+\sum_ {j=M+1}^{N}(Z_j\otimes  Z_j^*)S\succeq \sum_{j=M+1}^{N} K_j^N\succeq 0\quad \text{on $\D_2^\infty$}.   
\end{align}
Let  $z^{(i)} = (z_j^{(i)})_{j=1}^\infty$, ($1 \leq i \leq k)$ be vectors in $\D_\rho$ and $h_1,\ldots,h_k$ be in $\mathcal E_*$. As $\|S(\rho \odot z, \rho \odot w\|\leq \|S(\rho,\rho)\|,~z,w\in \text{Ball}(c_0)$ and $\sum_{r=M+1}^{N} Z_r\otimes Z_r^*$ is a kernel on $\D_\rho$, a direct calculation yields    
\begin{align}\label{ineq4}
0\leq \sum_{i,j=1}^{k} \sum_{r=M+1}^{N}z_r^{(i)}\overline{z_r^{(j)}}\langle S(z^{(i)},z^{(j)})h_j,h_i\rangle \leq \|S(\rho,\rho)\|\Big(\sum_{i=1}^{k}\|h_i\|\Big)^2\Big(\sum_{r=M+1}^{\infty}\rho_r^2\Big).
\end{align}
Then, by using Eqs. \eqref{ineq2} and (\ref{ineq4}),
%$I_{\mathcal E_*}-f_N\otimes {f^*_N}-\sum _{j=1}^{M}(I-Z_j\otimes Z_j^*)K_j^N+\sum_ {j=M+1}^{N}(Z_j\otimes  Z_j^*) \succeq 0S$ on $\D_\rho,$ by ,
\begin{align*}
\sum_{i,j=1}^{k}\Big(\langle (I_{\mathcal E_*}-f_N(z^{(i)}){f_N(z^{(j)})}^*)h_j,h_i\rangle& -\sum_{s=1}^{M}(1-z_s^{(i)}\overline{z_s^{(j)}})\langle K_s^N(z^{(i)},z^{(j)})h_j, h_i \rangle \Big)\\
&\geq -\|S(\rho,\rho)\|\Big(\sum_{i=1}^{k}\|h_i\|\Big)^2\Big(\sum_{r=M+1}^{\infty}\rho_r^2\Big).\end{align*}
Hence, by using Eq. (\ref{weaklyuniformly}) and the fact that for any $z\in \D_\rho,$ $f_N(z)$ converges to $f(z)$ in norm, letting $N\to \infty$ above yields that  
\begin{align*}
\sum_{i,j=1}^{k}\Bigg(\langle (I_{\mathcal E_*}-f(z^{(i)}){f(z^{(j)})}^*)h_j,h_i\rangle& -\sum_{s=1}^{M}(1-z_s^{(i)}\overline{z_s^{(j)}})\langle K_s(z^{(i)},z^{(j)})h_j, h_i \rangle \Bigg)\\
&\geq -\|S(\rho,\rho)\|\Big(\sum_{s=1}^{k}\|h_s\|\Big)^2\Big(\sum_{r=M+1}^{\infty}\rho_r^2\Big).
\end{align*}
Finally, by taking $M\to \infty,$ 
\begin{align*}
\sum_{i,j=1}^{k}\Bigg(\langle (I_{\mathcal E_*}-f(z^{(i)}){f(z^{(j)})}^*)h_j,h_i\rangle& -\sum_{s=1}^{\infty}(1-z_s^{(i)}\overline{z_s^{(j)}})\langle K_s(z^{(i)},z^{(j)})h_j, h_i \rangle \Bigg)\geq 0,
\end{align*}
proving that $K_0 \succeq 0$.
Absorbing the term $K_0$ into the \(j=1\) summand, we obtain

$$ I_{\mathcal E_*}-f(z)f(w)^* = (1-z_1\overline{w}_1) \left( \frac{K_0(z,w)}{1-z_1\overline{w}_1}+K_1(z,w) \right) +\sum_{j=2}^{\infty} (1-z_j\overline{w}_j)K_j(z,w). $$

Thus, upon renaming the kernels, we obtain the desired decomposition.
\end{proof}

\begin{lemma}
\label{Decom-to-realization}
    Let $X \subseteq \text{Ball}(\ell^\infty)$ and let $f: X \rightarrow \mathcal L(\mathcal E,\mathcal E_*)$ be a function. Suppose there exists a sequence of $\mathcal L(\mathcal E_*)$-valued kernels $(K_j)_{j=1}^\infty$ on $X$ such that for each $x, y \in \mathcal E_*$ and $z,w \in X,$
\begin{align}\label{abso}
\sum_{j=1}^\infty |\langle{K_j(z, w)x, y}\rangle| < \infty,
\end{align}
and
\begin{align}\label{aglerdec}
I_{\mathcal E_*} - f(z)f(w)^* = \sum_{j=1}^\infty (1-z_j \overline{w_j}) K_j(z, w), \quad z, w \in X,  
\end{align}
where the above series converges in WOT. Then there exist a sequence $(\mathcal H_j)_{j=1}^\infty$ of Hilbert spaces and a contraction $V = \begin{pmatrix}
    A & B\\
    C& D
\end{pmatrix}: \mathcal E \bigoplus \oplus_{j=1}^\infty \mathcal H_j \rightarrow  \mathcal E_* \bigoplus \oplus_{j=1}^\infty \mathcal H_j $ such that
$$
f(z) = A+ B(I-\Delta(z)D)^{-1}\Delta(z)C, \quad z \in X
$$
and for all $j \geq 1$,
$$
K_j(z, w) = B(I-\Delta(z)D)^{-1}P_j(I-D^*\Delta(w)^*)^{-1}B^*, \quad z, w \in X,
$$
where $P_k$ denotes the projection of $\oplus_{j=1}^\infty \mathcal H_j$ onto $\mathcal H_k$ and $\Delta(z) = \sum_{j=1}^\infty z_j P_j$ $($SOT$)$. Moreover, these  formulas extend $f$ and $K_j$ to $\text{Ball}(\ell^\infty)$ and there exists an $\mathcal L(\mathcal E_*)$-valued kernel $K_0$  on $\text{Ball}(\ell^\infty)$ such that 
$$
I_{\mathcal E_*} - f(z)f(w)^* = K_0(z, w) + \sum_{j=1}^\infty (1-z_j \overline{w_j}) K_j(z, w), \quad z, w \in \text{Ball}(\ell^\infty),
$$
where the series above converges in SOT.
\end{lemma}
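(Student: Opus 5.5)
This is the standard lurking isometry argument, adapted to infinitely many summands. First, for each $j$, factor $K_j$ through Kolmogorov: take $\mathcal H_j$ to be the reproducing kernel Hilbert space of $K_j$. Then $h_j(w)y:=K_j(\cdot,w)y$ gives $K_j(z,w)=h_j(z)^*h_j(w)$, i.e. $K_j(z,w)=\Phi_j(z)\Phi_j(w)^*$ with $\Phi_j(z)\in\mathcal L(\mathcal H_j,\mathcal E_*)$. The summability hypothesis \eqref{abso}, applied with $z=w$, gives $\sum_j\|\Phi_j(w)^*y\|^2=\sum_j\langle K_j(w,w)y,y\rangle<\infty$. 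Hence $\Phi(w)^*y:=(\Phi_j(w)^*y)_j\in\oplus_j\mathcal H_j$ is well defined. Since $\Phi(w)^*$ is an everywhere-defined linear map which is closed, the closed graph theorem shows it is bounded. Rewriting \eqref{aglerdec} termwise in WOT gives, for all $z,w\in X$,
$$
\langle y,y'\rangle+\langle \Delta(w)^*\Phi(w)^*y,\Delta(z)^*\Phi(z)^*y'\rangle=\langle f(w)^*y,f(z)^*y'\rangle+\langle \Phi(w)^*y,\Phi(z)^*y'\rangle .
$$
Here $\Delta(w)=\sum_j w_jP_j$ is a contraction for $w\in\mathrm{Ball}(\ell^\infty)$, and the termwise sums are justified by absolute convergence.

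Next, define the map
$$
\begin{pmatrix}y\\ \Delta(w)^*\Phi(w)^*y\end{pmatrix}\mapsto\begin{pmatrix}f(w)^*y\\ \Phi(w)^*y\end{pmatrix}.
$$
By the identity above it is isometric on the span of the left-hand vectors, so it extends to an isometry from the closure of that span into $\mathcal E\oplus\bigoplus_j\mathcal H_j$. Extend it by zero on the orthogonal complement; this gives a contraction $V^*=\begin{pmatrix}A^*&C^*\\ B^*&D^*\end{pmatrix}:\mathcal E_*\oplus\bigoplus\mathcal H_j\to\mathcal E\oplus\bigoplus\mathcal H_j$ (enlarging spaces is unnecessary since we only need a contraction). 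Reading off components gives
$$
A^*y+C^*\Delta(w)^*\Phi(w)^*y=f(w)^*y,\qquad B^*y+D^*\Delta(w)^*\Phi(w)^*y=\Phi(w)^*y .
$$
Since $\|D^*\Delta(w)^*\|\le\|\Delta(w)\|\le\sup_j|w_j|<1$, the second equation can be solved: $\Phi(w)^*=(I-D^*\Delta(w)^*)^{-1}B^*$. Substituting into the first and taking adjoints yields $f(z)=A+B(I-\Delta(z)D)^{-1}\Delta(z)C$. The same substitution gives $K_j(z,w)=\Phi(z)P_j\Phi(w)^*=B(I-\Delta(z)D)^{-1}P_j(I-D^*\Delta(w)^*)^{-1}B^*$.

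For the extension, these formulas make sense for all $z\in\mathrm{Ball}(\ell^\infty)$ because $\|\Delta(z)\|=\|z\|_\infty<1$. They are holomorphic since $z\mapsto\Delta(z)$ is bounded linear from $\ell^\infty$ into $\mathcal L(\oplus\mathcal H_j)$ (a Neumann series argument). Set $\Psi(z)=B(I-\Delta(z)D)^{-1}$ and $K_0(z,w):=I-f(z)f(w)^*-\Psi(z)(I-\Delta(z)\Delta(w)^*)\Psi(w)^*$. Because $I-\Delta(z)\Delta(w)^*=\sum_j(1-z_j\bar w_j)P_j$ in SOT, and left/right multiplication by the bounded $\Psi$ preserves SOT convergence of $\sum_j(1-z_j\bar w_j)\Psi(z)P_j\Psi(w)^*$ (apply it to $\Psi(w)^*x$), the series converges in SOT to $\Psi(z)(I-\Delta(z)\Delta(w)^*)\Psi(w)^*$.

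It remains to see $K_0\succeq0$. The main obstacle is this standard but computation-heavy identity; I would verify it directly. Write $g(z)=\begin{pmatrix}I&\Psi(z)\Delta(z)\end{pmatrix}$. The realization gives $\begin{pmatrix}f(z)&\Psi(z)\end{pmatrix}=g(z)V$, using $\Psi(z)=B+\Psi(z)\Delta(z)D$. Then
$$
I-f(z)f(w)^*-\Psi(z)\Psi(w)^*+\Psi(z)\Delta(z)\Delta(w)^*\Psi(w)^*=g(z)(I-VV^*)g(w)^*,
$$
which is a positive kernel because $V$ is a contraction. I would double-check the bookkeeping of $\Delta$ factors and the orientation $V$ versus $V^*$ carefully, as that is where errors typically arise.
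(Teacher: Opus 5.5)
Your proposal is correct and follows the paper's proof essentially step for step. The shared steps are: the RKHS factorization of each $K_j$, the lurking isometry defined on the span of the vectors $(y,\Delta(w)^*\Phi(w)^*y)$, the block form of $V^*$, solving for $\Phi(w)^*=(I-D^*\Delta(w)^*)^{-1}B^*$, and using the resulting formulas to extend to $\mathrm{Ball}(\ell^\infty)$. Your $K_0(z,w)=g(z)(I-VV^*)g(w)^*$ is exactly the paper's $G(z)^*G(w)$ with $I-VV^*=W^*W$.

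The one cosmetic difference is that you obtain boundedness of $\Phi(w)^*$ from the closed graph theorem. It also follows directly from the decomposition at $z=w$, which gives
\[
\sum_j\langle K_j(w,w)y,y\rangle\le \frac{\|y\|^2}{1-\|w\|_\infty^2}.
\]
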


Even if $X$ is an open subset, we do not assume that $K_j$ is holomorphic in the first variable or anti-holomorphic in the second variable in the hypotheses of Lemma \ref{Decom-to-realization}.

\begin{proof}[Proof of Lemma ~\ref{Decom-to-realization}]
Let $\mathcal H_j$ be the reproducing kernel Hilbert space with the kernel $K_j$ for all $j\in \N.$ For $z\in X$ and $y\in \mathcal E_*,$ let 
\begin{align*}
% \label{DefnV}
T(y, \overline{z_1}(K_1(\cdot,z)y), \overline{z_2}(K_2(\cdot,z)y),\ldots)=(f(z)^*y, K_1(\cdot,z)y, K_2(\cdot,z)y, \ldots)
\end{align*}
that extends linearly onto the $\text{span}\{(y, \overline{z_1}(K_1(\cdot,z)y), \overline{z_2}(K_2(\cdot,z)y),\ldots): z\in X, y \in \mathcal E_*\}$. By Eqs. (\ref{abso}) and  (\ref{aglerdec}), $T$ is a well-defined isometry, implying that $T$ extends as a contraction from $\mathcal E_* \bigoplus\oplus_{j=1}^{\infty}\mathcal H_j$ to $\mathcal E \bigoplus \oplus_{j=1}^{\infty}\mathcal H_j.$ Let 
\begin{equation}
\label{T-blockmatrix}
T=\begin{pNiceMatrix}[first-row,last-col,nullify-dots]
\mathcal E_* & \oplus_{j=1}^{\infty} \mathcal H_j & \\
A^* &C^* & \mathcal E\\ B^*& D^*& \oplus_{j=1}^{\infty} \mathcal H_j.
\end{pNiceMatrix}
\end{equation}
be in block matrix form. For $z\in X \subseteq \text{Ball}(\ell^\infty),$ define $F(z)^*: \mathcal E_*\to \oplus_{j=1}^{\infty}\mathcal H_j$ by $$F(z)^*(y)=(K_j(\cdot,z)y)_{j=1}^\infty.$$ Since $z\in X,$ by Eq. (\ref{aglerdec}), we get that $F(z)^*$ is a bounded linear operator. For $z\in \text{Ball}(\ell^\infty),$ let $$\Delta (z)((h_j)_{j=1}^\infty):=(z_jh_j)_{j= 1}^\infty,~ (h_j)_{j =1}^\infty \in \oplus_{j=1}^{\infty}\mathcal H_j.$$ Then,
\begin{align}\label{shortDefnV}
T\begin{pmatrix}
    I_{\mathcal E_*} \\
    \Delta(z)^*F(z)^*
\end{pmatrix}(y)= \begin{pmatrix}
    A^* & C^*\\
    B^*& D^*
\end{pmatrix}\begin{pmatrix}
    y\\
    \Delta(z)^*F(z)^*y
\end{pmatrix} =(f(z)^*y, F(z)^*y)
\end{align} for any $z\in X$ and $y\in \mathcal E_*.$ This implies that
\begin{align}
    \label{two-equalities}
    f(z) = A + F(z)\Delta(z)C\quad \& \quad  F(z) = B+ F(z)\Delta(z)D \quad \text{for all $z \in X$}.
\end{align}
Consequently,
\begin{align}\label{formula F}F(z)=B(I-\Delta(z)D)^{-1}\end{align}
and \begin{align}\label{formula f}f(z)=A+B(I-\Delta(z)D)^{-1}\Delta(z)C
\end{align}for any $z\in X.$ Since the right-hand sides of Eqs. (\ref{formula F}) and (\ref{formula f}) are well-defined for $z \in \text{Ball}(\ell^\infty)$, these same formulas naturally extend the functions $F$ and $f$ to the entire domain $z \in \text{Ball}(\ell^\infty)$.
% Observe that the right hand sides of the eqs. (\ref{formula F}) and (\ref{formula f}) are defined for $z \in \text{Ball}(\ell^\infty),$ which implies that the functions $f$ and $F$ and $f$ as given in  eqs. (\ref{formula F}) and (\ref{formula f}) extend to  $z \in \text{Ball}(\ell^\infty)$. 
% It is easy to see that the eq. (\ref{shortDefnV}) is valid for any $z\in \text{Ball}(\ell^\infty)$ and $y\in \mathcal E_*.$ 
Moreover, since for any $x,y\in \mathcal E_*$,
\begin{align*}
    \langle F(z)(0, \ldots,0, K_j(\cdot,w)y,0, \ldots),x \rangle &=\langle (0,\ldots,0,K_j(\cdot,w)y,0,\ldots), F(z)^*x\rangle\\
    &= \langle K_j(z,w)y,x\rangle, \quad j\in \N, z,w \in X,
    \end{align*}
    it follows that $F(z)(0, \ldots,0, K_j(\cdot,w)y,0, \ldots)=K_j(z,w)y$, and hence $$F(z)P_jF(w)^*=K_j(z,w), \quad z,w \in X, j\in \N,$$ where $P_j$ is the orthogonal projection of $\oplus_{k=1}^{\infty} \mathcal H_k$ onto $\mathcal H_j.$ Since $F$ is defined on $\text{Ball}(\ell^\infty)$, it then follows that $F(z)P_jF(w)^*$ is a kernel on $\text{Ball}(\ell^\infty)$, and hence, $K_j$ also extends to $\text{Ball}(\ell^\infty)$ for all $j\in \N.$   

Let $W$ be a contraction such that $I-T^*T=W^*W.$ Then, for $y \in \mathcal E_*$,
$$
\|W(y, \Delta(z)^*F(z)^*y)\|^2\leq \|y\|^2 + \|\Delta(z)^*F(z)^*y\|^2, \quad z \in \text{Ball}(\ell^\infty).
$$
This allows us to define the function $G:\text{Ball}(\ell^\infty)\to \mathcal L(\mathcal E_*, \mathcal E_*\bigoplus \oplus_{j=1}^{\infty}\mathcal H_j)$ given by $$G(z):=W \begin{pmatrix}
    I_{\mathcal E_*}\\
    \Delta(z)^*F(z)^*
\end{pmatrix}, \quad z \in \text{Ball}(\ell^\infty).$$ 
% It is easy to see that $G(z)$ is a bounded operator for all $z\in \text{Ball}(\ell^\infty),$ 
Note that $G$ is an anti-holomorphic function, and since the extended functions $f, F$ satisfy \eqref{two-equalities} on $\text{Ball}(\ell^\infty)$, we have for any $z, w \in \text{Ball}(\ell^\infty)$ that
\begin{align*}
G(z)^*G(w)
&= \begin{pmatrix}
    I_{\mathcal E_*}
 &     F(z)\Delta(z)
\end{pmatrix} (I-T^*T) \begin{pmatrix}
    I_{\mathcal E_*} \\
      \Delta(w)^*F(w)^*
\end{pmatrix} \notag\\
&\stackrel{\eqref{shortDefnV}}{=} I_{\mathcal E_*}+ F(z)\Delta(z)\Delta(w)^*F(w)^* -   \begin{pmatrix}
    I_{\mathcal E_*}
 &     F(z)\Delta(z)
\end{pmatrix} T^*\begin{pmatrix}
    f(w)^* \\
    F(w)^*
\end{pmatrix}\notag\\
&\stackrel{\eqref{T-blockmatrix}}{=} I_{\mathcal E_*}+ F(z)\Delta(z)\Delta(w)^*F(w)^* -   \begin{pmatrix}
    A+F(z)\Delta(z)C &  B+  F(z)\Delta(z)D
\end{pmatrix} \begin{pmatrix}
    f(w)^* \\
    F(w)^*
\end{pmatrix}\\
&= I_{\mathcal E_*}+ F(z)\Delta(z)\Delta(w)^*F(w)^* -
    f(z)f(w)^* -  F(z)F(w)^*.
\end{align*}
% Now, $T$ as the block matrix and using eqs. (\ref{formula F}) and (\ref{formula f}), 
Thus, we obtain that $$I_{\mathcal E_*}-f(z)f(w)^*=F(z)(I-\Delta(z)\Delta(w)^*)F(w)^*+G(z)^*G(w), \quad z, w\in \text{Ball}(\ell^\infty).$$ 
Furthermore, note that $$F(z)(I-\Delta(z)\Delta(w)^*)F(w)^*=\sum_{j=1}^{\infty}(1-z_j\overline{w_j})F(z)P_jF(w)^*,$$ where the above series converges in SOT. Hence, the proof now concludes by taking $K_0(z,w)=G(z)^*G(w)$ for $z,w\in \text{Ball} (\ell^\infty)$ and $V=T^*$.
\end{proof}

\begin{proof}[Proof of Theorem~\ref{Auto-Ext}]
$(1) \implies (2)$: First, observe that if two holomorphic functions on $\text{Ball}(c_0)$ agree on $\D_\rho$, then they coincide on  $\text{Ball}(c_0)$. The desired implication now follows from Proposition \ref{Hilbert criterion}, Lemma \ref{Realization-formula}, and  Lemma \ref{Decom-to-realization} with $X=\D_\rho$.

$(2) \implies (3)$: It is obvious.

$(3) \implies (1)$: This follows from Theorem \ref{Agler decomposition realization}.

$(1)\iff (4)$: The forward implication is an immediate consequence of Lemmas \ref{Realization-formula} and \ref{Decom-to-realization}. The reverse implication follows from Theorem \ref{Agler decomposition realization}.
\end{proof}
Next, we establish a key ingredient for an Agler-Pick interpolation theorem in infinitely many variables. In \cite{AM}, the authors studied admissible kernels on the bidisc $\D^2$ and characterized the existence of a certain representation for an $\mathcal L(\mathcal E_*)$-valued self-adjoint function on $\D^2\times \D^2$  using $\mathcal L(\mathcal E_*)$-valued admissible kernels; see \cite[Theorem 11.26]{AM}. We generalize the notion of admissible kernels to subsets of $\D_2^\infty$ and characterize when an $\mathcal L(\mathcal E_*)$-valued self-adjoint function, defined on a finite subset of $\D_2^\infty$, admits the representation (\ref{RepresntationG}); see Theorem~\ref{AdmissibleDecomposition}. The proof uses ideas from \cite{AM} and \cite{Knese}. 

\begin{definition}
Given a set $X \subseteq \D_2^\infty$, let $K: X \times X \rar \mathcal L(\mathcal E_*)$ be a kernel. The kernel $K$ is called admissible if $(I-Z_j\otimes Z_j^*)K \succeq 0$ for all $j\in \N.$    
\end{definition}

Let $X$ be a set. For functions $G, K: X  \to \mathcal L(\mathcal E),$ let $G\oslash K: X \to \mathcal L(\mathcal E\otimes \mathcal E)$ be defined by
$$G\oslash K(z):=G(z)\otimes K(z), \quad z \in X.$$ For self-adjoint matrices $A$ and $B$, the notation $A \succeq B$ means that $A-B$ is positive semidefinite.

\begin{theorem}\label{AdmissibleDecomposition}
Let $X\subset \D_2^\infty$ be a finite subset, and let $G:X\times X\to \mathcal L(\mathcal E_*)$ be a  self-adjoint function, i.e., $G(z,w)=G(w,z)^*,~z,w\in X.$ Then, $G\oslash K$ is an $\mathcal L(\mathcal E_* \otimes \mathcal E_*)$-valued kernel for every admissible $\mathcal L(\mathcal E_*)$-valued kernel $K$ on $X$ if and only if there exists a sequence of $\mathcal L(\mathcal E_*)$-valued kernels $(K_j)_{j=1}^\infty$ on $X$ such that 
\begin{align}\label{RepresntationG}
G(z,w) = \sum_{j=1}^\infty (1-z_j \overline{w_j}) K_j(z, w), \quad z, w \in X,
\end{align}
where the series above converges in SOT.
\end{theorem}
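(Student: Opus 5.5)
The plan is to prove the easy direction directly and the hard direction by a cone-separation (Hahn--Banach) argument, as in \cite[Theorem 11.26]{AM}, handling the infinitely many coordinates through the Hilbert-space decay of points of $\D_2^\infty$ as in \cite{Knese}.

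\emph{Sufficiency.} Suppose $G=\sum_j(1-z_j\overline{w_j})K_j$ (SOT) and let $K$ be an admissible kernel on $X$. The Schur product of two kernels is a kernel, so each term
\[
\big((1-Z_j\otimes Z_j^*)K\big)\oslash K_j
\]
is an $\mathcal L(\mathcal E_*\otimes\mathcal E_*)$-valued kernel on $X$. Summing over $j$ and passing to the SOT limit gives that $G\oslash K$ is a kernel. The limit is legitimate because $X$ is finite, so only finitely many points enter, and positivity is preserved under SOT limits. We must check that the tensored partial sums converge in SOT. For elementary tensors this follows from the SOT convergence of $\sum_j(1-z_j\overline{w_j})K_j$, and uniform boundedness of the partial sums (uniform boundedness principle) extends it to all of $\mathcal E_*\otimes\mathcal E_*$.

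\emph{Necessity.} Let $X=\{x^{(1)},\dots,x^{(m)}\}$ and identify self-adjoint functions on $X\times X$ with self-adjoint operators on $\mathcal E_*^m$. Consider the cone
\[
\mathcal C=\Big\{\textstyle\sum_j(1-Z_j\otimes Z_j^*)K_j \ \text{(SOT)} : K_j \text{ kernels, series convergent}\Big\}.
\]
The argument proceeds in three steps.
\begin{enumerate}
\item \emph{$\mathcal C$ is closed} in the weak-$*$ (or WOT on bounded sets) topology. Given a bounded net in $\mathcal C$, control the individual kernels by the same trick as in Lemma~\ref{Realization-formula}. Multiplying by the kernel $S$ of \eqref{S,Sn} gives $\sum_j S_jK_j=S\cdot G'$, and $S_j\succeq I$, so $\sum_j K_j\preceq$ a bounded multiple of $S$ on $X$. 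Here $S$ is bounded on $X$ because $X\subset\D_2^\infty$ is finite, i.e.\ $\sum_j|z_j|^2<\infty$. This uniform bound lets us extract WOT-convergent subnets for each $K_j$ (diagonally). The tail $\sum_{j>M}(1-z_j\overline{w_j})K_j$ is controlled by $\sum_{j>M}\max_X|z_j|^2\to0$, exactly as in \eqref{ineq4}; any defect kernel is absorbed into the $j=1$ term by dividing by $1-z_1\overline{w_1}$, as at the end of the proof of Lemma~\ref{Realization-formula}. Convergence in SOT rather than WOT is then recovered in finite dimensions of $X$ by applying Lemma~\ref{Decom-to-realization}, whose conclusion gives SOT convergence.
\item \emph{Separation.} If $G\notin\mathcal C$, Hahn--Banach in the trace-class/weak-$*$ duality yields a normal functional $L$ with $L\ge0$ on $\mathcal C$ and $L(G)<0$. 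Because $\mathcal C$ contains all $(1-Z_j\otimes Z_j^*)K$ and is a cone, we may represent $L$ via a positive trace-class operator on $\mathcal E_*^m$. After perturbing by a small multiple of the trace to make it strictly positive, write $L(H)=\operatorname{tr}(H\,\Theta)$ with $\Theta$ interpreted as an $\mathcal L(\mathcal E_*)$-valued, possibly trace-class-valued, kernel.
\item \emph{Admissible witness.} Nonnegativity of $L$ on $(1-Z_j\otimes Z_j^*)P$ for every kernel $P$ means exactly that $(1-Z_j\otimes Z_j^*)\Theta^{t}\succeq0$ for all $j$, i.e.\ a transpose/conjugate $K$ of $\Theta$ is admissible. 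The inequality $L(G)<0$ then translates, via the identity $\operatorname{tr}(AB)=\langle (A\otimes B)\,\xi,\xi\rangle$ with $\xi$ the canonical maximally-entangled-type vector in $\mathcal E_*\otimes\mathcal E_*$ (summed over the points of $X$), into a failure of positivity of $G\oslash K$. This contradicts the hypothesis.
\end{enumerate}

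The main obstacle is infinite-dimensionality of $\mathcal E_*$ in steps 2 and 3. The vector $\xi$ is not in $\mathcal E_*\otimes\mathcal E_*$ unless $\Theta$ is trace class, so I would first prove the result when $\mathcal E_*$ is finite-dimensional. I would then handle general separable $\mathcal E_*$ by compressing to finite-dimensional subspaces $P_n\mathcal E_*$, which preserves both the hypothesis and admissibility. Obtaining decompositions of $P_nGP_n$ with kernels bounded uniformly in $n$, via the $S$-bound of step 1, lets a diagonal WOT limit produce the decomposition of $G$. The closedness estimate in step 1 is the technical heart of the argument.
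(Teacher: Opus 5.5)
Your proposal is correct, and its core is the paper's argument: the cone $\mathscr C$; closedness via Schur multiplication by $S$, the $\ell^2$ tail estimate, and absorption of the defect into the $j=1$ term; Hahn--Banach separation by a positive trace-class $T$; and the transposed kernel $K^t(x_i,x_j)=T_{j,i}^t$ as the admissible witness.

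\textbf{Where the routes differ.} The difference is how you handle $\dim\mathcal E_*=\infty$. The paper never reduces to finite dimensions. It separates directly in the duality between $\mathcal L(\oplus_{i=1}^N\mathcal E_*)$ and the trace class, which is why it needs Krein--\v{S}mulian and metrizability of balls. It then tests $G\oslash K^t$ only against the truncated vectors $\sum_{k\in F}e_k\otimes e_k$ with $F\subset\mathbb N$ finite. Letting $F\uparrow\mathbb N$ and using that each $T_{i,j}$ is trace class gives $\operatorname{tr}(TG)\ge 0$. So the obstacle you identify disappears after a one-line truncation. Note also that $\sum_k e_k\otimes e_k$ never lies in $\mathcal E_*\otimes\mathcal E_*$ when $\mathcal E_*$ is infinite-dimensional, whatever $\Theta$ is; being trace class is what makes the truncations converge.

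\textbf{Your compression route is still valid.} An admissible kernel on $Q_n\mathcal E_*$, extended by zero, is admissible on $\mathcal E_*$. Compressing $G\oslash\tilde K$ by $Q_n\otimes Q_n$ then shows the hypothesis passes to $Q_nGQ_n$. The finite-dimensional case puts $Q_nGQ_n$ in $\mathscr C$, with $\|Q_nGQ_n\|\le\|G\|$ and $Q_nGQ_n\to G$ in SOT. Closedness of $\mathscr C$ under bounded WOT sequential limits then gives $G\in\mathscr C$. This route confines the separation step to finite dimensions and avoids Krein--\v{S}mulian. The cost is the extra compression step and a second run of the closedness argument.

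\textbf{One step needs repair.} You cannot recover SOT convergence ``by applying Lemma~\ref{Decom-to-realization}'': that lemma concerns functions of the form $I-f(z)f(w)^*$, not an arbitrary self-adjoint limit $C$. Use the paper's monotonicity argument instead. Passing to the WOT limit in $C_nS\succeq\sum_{j\le m}S_jA_{n,j}$ gives $CS\succeq\sum_{j\le m}S_jA_j$. This is an increasing sequence of positive operators bounded above, hence SOT convergent. Entrywise multiplication by $1/S$ on the finite set $X$ preserves SOT convergence. The point is vacuous when $\mathcal E_*$ is finite-dimensional, but your final passage to general $\mathcal E_*$ needs exactly this.
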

\begin{proof}
The sufficiency is straightforward. For the converse part, let $X=\{x_1,\ldots,x_N\}$ with cardinality $N$. Then, we may view any $\mathcal L(\mathcal E_*)$-valued function $g$ on $X \times X$ as the matrix $(g(x_i,x_j))_{i,j=1}^N \in \mathcal L(\oplus_{i=1}^{N} \mathcal E_*).$ Consider the cone 
\begin{equation*}
\mathscr C = \Big\{(z,w) \to \sum_{j=1}^\infty (1-z_j \overline{w_j}) A_j(z,w)~\text{(SOT)} : A_j: X \times X \to \mathcal L(\mathcal E_*)~\text{is a kernel $\forall~ j \geq 1$} \Big\}.
\end{equation*}
Note that $\mathscr C$ is convex. We first claim that 
\begin{equation}
\label{weak-star}
  \text{ $\mathscr C$ is closed in the weak star topology.}  
\end{equation}
By the Krein–Šmulian theorem and the Banach–Alaoglu theorem, it suffices to show that, for every $d>0$, the set $\mathscr C_d:=\mathscr C \cap \{A\in \mathcal L(\oplus_{i=1}^{N} \mathcal E_*): \|A\|\leq d\}$ is weak star closed in the closed ball $\{A\in \mathcal L(\oplus_{i=1}^{N} \mathcal E_*): \|A\|\leq d\}$. Since $\mathcal E_*$ is separable, the weak star topology on each such ball is metrizable. In view of this, let $\{C_n\}_{n=1}^\infty \subseteq \mathscr C_d$ be a sequence which converges to $C$ in weak star topology, and let $C_n$ be of the form
 \begin{equation}
 \label{Cn-formula}
     C_n(z,w) = \sum_{j=1}^\infty (1-z_j \overline{w_j}) A_{n,j}(z,w)~\text{(SOT)}, \quad z, w \in X,
 \end{equation}
where $A_{n,j}$ are $\mathcal L(\mathcal E_*)$-valued kernels for all $j, n \in \N$.  
 Note that for all $n \geq 1$, $C_n$ is a self-adjoint matrix; that is, $C_n(w,z) = C_n(z,w)^*,~z,w \in X$, and since $\{C_n\}_{n=1}^\infty$ is bounded, it follows that $$d I \succeq C_n \succeq -dI \quad \text{for all $n \geq 1$}.$$ 
% Note that $S,S_n$ are kernels.
Let $\delta: X \times X \to \mathcal L(\mathcal E_*)$ be defined by
 \begin{equation*}
     \delta(z, w) 
     = \begin{cases}
         I_{\mathcal E_*}, & \text{if}~z=w,\\
         0, & \text{otherwise}.
     \end{cases}
 \end{equation*} 
 Let $S, S_n$ be kernels given in \eqref{S,Sn}. If $d': = \max\limits_{1\leq j\leq N} S(x_j,x_j)$, then by using $S,S_n \succeq  I$ and \eqref{Cn-formula},  
\begin{equation}
\label{relation-positivity}
d' \delta \succeq d\delta S\succeq C_n S = \sum_{j=1}^\infty S_j A_{n,j} \succeq \sum_{j=1}^m S_j A_{n,j}\succeq \sum_{j=1}^m A_{n,j} \succeq A_{n,m}, \quad m, n \in \mathbb N.    
\end{equation}
 % by the finiteness of the set $X$.
 This shows that the sequence $\{A_{n,m}\}_{m,n=1}^\infty \subseteq \mathcal L(\oplus_{j=1}^N \mathcal E_*)$ of operators is uniformly bounded; that is, $\|A_{n,m}\| \leq d'$ for all  $n, m \in \N$. By a standard diagonal argument, there exists a common subsequence of $\mathbb N$, say the subsequence is $\mathbb N$ (without loss of generality), such that for each $m \in \mathbb N$, 
\begin{equation}
\label{Diagonal-seq}
   A_{n, m} \overset{n \to \infty}{\longrightarrow} A_m~\text{in WOT}.
\end{equation}
This implies that the limit $A_m$ is a positive semidefinite matrix. Furthermore, since $C_n \overset{w^*}{\longrightarrow} C$ as $n \to \infty$ gives that $C_n \overset{n \to \infty}{\longrightarrow} C$ in WOT, by  \eqref{relation-positivity} and \eqref{Diagonal-seq},
$$
CS \succeq \sum_{j=1}^m S_jA_j \succeq \sum_{j=1}^m A_j.
$$
Thus, the sequence $\{\sum_{j=1}^m S_jA_{j}\}_{m = 1}^\infty$ is increasing and bounded above by $CS$, implying that $\sum_{j=1}^\infty S_jA_j$ converges in SOT. Therefore, 
\begin{equation}\label{SOT-con-diff}
    \sum_{j=1}^\infty (I- Z_j \otimes Z_j^*)A_j~\text{converges in SOT}.
\end{equation}
Fix $n \in \mathbb N$. As above, it also follows from \eqref{relation-positivity} that the sequence $\{\sum_{j=1}^m A_{n, j}\}_{m = 1}^\infty$ is increasing and bounded above by $d'\delta$, which shows that $\sum_{j=1}^\infty A_{n, j}$ converges in SOT. Therefore, since $X \subseteq \ell^2$, the series $\sum_{j=1}^\infty (Z_j \otimes Z_j^*) \delta$ converges in SOT, and hence, for all $m \geq 1,$
\begin{eqnarray*}
&&\sum_{j=m+1}^\infty (I - Z_j \otimes Z_j^*)A_{n,j}~\text{(SOT)} + d' \sum_{j=m+1}^\infty (Z_j \otimes Z_j^*) \delta ~\text{(SOT)} \notag\\
     &&\overset{\eqref{relation-positivity}}\succeq \sum_{j=m+1}^\infty (I - Z_j \otimes Z_j^*)A_{n,j}~\text{(SOT)} + \sum_{j=m+1}^\infty (Z_j \otimes Z_j^*) A_{n,j} ~\text{(SOT)}\notag\\ 
     &&=\sum_{j=m+1}^\infty A_{n,j}~\text{(SOT)} \succeq 0 \quad \text{on $X$}.
\end{eqnarray*}
This together with \eqref{Cn-formula} yields that for $m \geq 1,$ 
\begin{eqnarray*}
    C_n - \sum_{j=1}^m (I - Z_j \otimes Z_j^*)A_{n,j} 
     \succeq  -d' \sum_{j=m+1}^\infty (Z_j \otimes Z_j^*) \delta ~\text{(SOT)}~  
     \succeq -d' \max\Big\{\sum_{j=m+1}^\infty |z_j|^2 : z \in X \Big\} \delta.
\end{eqnarray*}
% Since $C_n \overset{w^*}{\longrightarrow} C$ as $n \to \infty$, we get that $C_n \to C$ weakly as $n \to \infty$,
Now, by \eqref{Diagonal-seq}, letting $n \to \infty$ in the expression above yields  
\begin{equation} 
\label{limit-after-exp1}
    C - \sum_{j=1}^m (I - Z_j \otimes Z_j^*)A_{j}
     \succeq -d' \max\Big\{\sum_{j=m+1}^\infty |z_j|^2 : z \in X \Big\} \delta.
\end{equation}
 Since $X \subseteq \ell^2$ is finite, $\max\big\{\sum_{j=m+1}^\infty |z_j|^2 : z \in X \big\}$ converges to $0$ as $m \to \infty$, and hence, by \eqref{SOT-con-diff} and sending $m \to \infty$ in \eqref{limit-after-exp1},
$$
 C - \sum_{j=1}^\infty (I - Z_j \otimes Z_j^*)A_{j}~\text{(SOT)} 
     \succeq 0.
$$
This finally gives that $C \in \mathscr C_d$, proving the claim \eqref{weak-star}. 

The remaining part of the argument is standard. Let $G\oslash K$ be a kernel for every admissible $\mathcal L(\mathcal E_*)$-valued kernel $K$ on $X.$ Suppose $G\notin \mathscr C.$ Then, by the Hahn-Banach separation theorem, there exists a self-adjoint trace class operator $T=(T_{i,j})_{i,j=1}^ N \in \mathcal L(\oplus_{i=1}^{N}\mathcal E_*)$ such that \begin{equation}
   \label{contr-assump}
   tr(TG)<0\leq tr (TC) \quad \text{for all $C\in \mathscr C$}.
\end{equation}
For any $h =(h_j)_{j=1}^N \in \oplus_{j=1}^N \mathcal E_*$,  let $a:X\to \mathcal E_*$ be given by $a(x_j) = h_j~ (1\leq j \leq N)$ and let $T_a: X\times X\to \mathcal L(\mathcal E_*)$ be  denoted by \begin{align}\label{defnofTa}T_a(x_i,x_j)=a(x_i)\otimes a(x_j), \quad 1\leq i, j\leq N,
\end{align}
where the right-hand side denotes a rank-one operator. Then 
$$
\langle Th, h\rangle = \sum_{i,j=1}^{N} \langle T_{i,j}h_j, h_i\rangle = tr(TT_a).
$$
Since $T_a$ is a kernel and $T_a = (I-Z_1\otimes Z_1^*) \frac{T_a}{I-(Z_1\otimes Z_1^*)}$, we have $T_a\in\mathscr C.$ Therefore, by \eqref{contr-assump}, $\langle Th, h\rangle \geq 0$.
% let $a\otimes a^*: X\times X\to \mathcal L(\mathcal E_*)$ denote the function $$a\otimes a^*(x_i,x_j)=a(x_i)\otimes a(x_j).$$ 
% Let $C=(I-Z_1\otimes Z_1^*) \frac{a\oslash a^*}{I-(Z_1\otimes Z_1^*)}.$ Then $C\in \mathscr C$, and hence $$tr(TC)=tr(T)=\sum_{i=1}^{N}\sum_{j=1}^{N} \langle T_{i,j}a(x_j), a(x_i)\rangle.$$ 
Thus,  $T$ is a positive trace class operator.

Let $\{e_k:k\in \N\}$ be an orthonormal basis of $\mathcal E_*.$ Define $K^t: X\times X\to \mathcal L(\mathcal E_*)$ by $$K^t(x_i, x_j)=T_{j,i}^t, \quad 1\leq i,j\leq N,$$ where $T_{j,i}^t$ denotes the transpose of $T_{j,i}$ with respect to the orthonormal basis $\{e_k:k\in \N\}$. Let $u =\sum_{k=1}^{\infty}c_ke_k, v = \sum_{k=1}^{\infty}d_ke_k$ be in $\mathcal E_*$, and
define $\overline{u}=\sum_{k=1}^{\infty}\overline{c_k}e_k$ and $\overline{v}=\sum_{k=1}^{\infty}\overline{d_k}e_k.$ Since for any $k,l\in \N,$ $\langle T_{j,i}^t(e_k),e_l\rangle=\langle T_{j,i}(e_l),e_k\rangle,$ we get that 
\begin{align}\label{Transbar}\langle K^t(x_i, x_j)u,v\rangle=\langle T_{j,i}(\overline{v}), \overline{u}\rangle
\end{align}
for all $1\leq i,j\leq N.$ Combining this with the positivity of $T$ gives that $K^t$ is a kernel.

For each $k\in\mathbb N$ and each function $a:X\to\mathcal E_*$, define $C_{k,a}:=(I-Z_k\otimes Z_k^*) (T_a),$ where the kernel $T_a$ is defined as in Eq. (\ref{defnofTa}). Then, $C_{k,a} \in \mathscr C$. Hence, a direct computation shows that 
\begin{align*}0\leq tr(TC_{k,a})&=\sum_{i=1}^{N}\sum_{j=1}^{N} (1-x_j(k)\overline{x_i(k)})\langle T_{i,j}a(x_j), a(x_i)\rangle
\end{align*}
where $x_j = (x_j(k))_{k=1}^\infty$ for all $j =1, \ldots, N$.
This together with Eq. (\ref{Transbar}) implies that $K^t$ is an admissible kernel. So, by the assumption, $G\oslash K^t$ is a kernel. In particular,  
% Since $G\otimes K^t$ is  positive semidefinite, 
for any finite subset  $F$ of $\N,$ we get that
\begin{align}
\label{finitepositive}
\sum_{i,j=1}^{N}\sum_{k, l \in F}\langle G(x_i,x_j)e_k, e_l\rangle \langle K^t(x_i,x_j)e_k, e_l \rangle\geq 0.
\end{align}
It is easy to see that $$tr(TG)=\sum_{i,j=1}^{N} \sum_{l=1}^\infty \sum_{k=1}^\infty \langle G(x_i,x_j)e_k,e_l\rangle \langle K^t(x_i, x_j)e_k,e_l\rangle.$$
Since each $T_{i,j}$ is trace class and Eq. (\ref{finitepositive}) holds for every finite subset $F,$ it follows that $tr(TG)\geq 0$, which is a contradiction. Hence, $G$ admits a representation as in Eq. (\ref{RepresntationG}). This completes the proof.
\end{proof}

\section{Agler-Pick interpolation}\label{Agler-Pick}

% Below we present the main result of this section, which provides a version of Agler-Pick interpolation type theorem in operator-valued setting. This extends the scalar case \cite[Theorem~1.7]{Knese}.
In this section, we present a version of Agler-Pick interpolation type theorem in infinitely many variables in operator-valued setting. This extends the scalar case \cite[Theorem~1.7]{Knese}.
\begin{theorem}\label{A_P_infinite}
 Let $X$ be a finite subset of $\text{Ball}(\ell^\infty)$ and $f: X \to \overline{\text{Ball}}(\mathcal L(\mathcal E,\mathcal E_*))$ be a function. Consider the following conditions.
 \begin{itemize}
     \item [(i)] There exists $\tilde{f} \in \mathcal {SA}_\infty(\mathcal E, \mathcal E_*)$ with $\tilde{f}|_{X} = f$.
     \item [(ii)] There exists a sequence of $\mathcal L(\mathcal E_*)$-valued kernels $(K_j)_{j=1}^\infty$ on $X$ such that 
        $$
I_{\mathcal E_*} - f(z)f(w)^* = \sum_{j=1}^\infty (1-z_j \overline{w_j}) K_j(z, w), \quad z, w \in X,
$$
where the series above converges in SOT.

\item [(iii)] The function $(I_{\mathcal E_*}-f\otimes f^*)\otimes K$ is a kernel for every admissible $\mathcal L(\mathcal E_*)$-valued kernel $K$ on $X$, where $f\otimes f^*$ denotes the function $f\otimes f^*(z,w)=f(z)f(w)^*.$
% \item [(iv)] \textcolor{red}{For every tuple $T = (T_1, T_2, \ldots)$ of commuting, contractive, simaltaneously diagonalizable matrices whose joint eigenspaces have dimensnions at most $1$ and satisfy $\sigma(T) \subseteq X$, we have $\|f(T)\| \leq 1$.}
 \end{itemize}
 Then $(i)$ and $(ii)$ are equivalent and imply $(iii)$, while $(iii)$ implies $(ii)$ whenever $X \subseteq \mathbb D_2^\infty$.
\end{theorem}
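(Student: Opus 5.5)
The plan is to prove the chain $(i)\Rightarrow(ii)\Rightarrow(i)$, then $(ii)\Rightarrow(iii)$, and finally $(iii)\Rightarrow(ii)$ for $X\subseteq\mathbb D_2^\infty$, leaning on Theorem~\ref{Auto-Ext}, Lemma~\ref{Decom-to-realization} and Theorem~\ref{AdmissibleDecomposition}. Throughout, ``$\tilde f|_X=f$'' is read as: $\tilde f$ admits a holomorphic extension to $\text{Ball}(\ell^\infty)$, of the kind produced in Theorem~\ref{Auto-Ext}, that agrees with $f$ on $X$. This is needed because $X$ need not lie in $\text{Ball}(c_{00})$ and such extensions are not unique.

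\emph{$(i)\Rightarrow(ii)$.} By Theorem~\ref{Auto-Ext}, $(1)\Rightarrow(2)$, the extension of $\tilde f$ to $\text{Ball}(\ell^\infty)$ carries kernels $K_j$ on $\text{Ball}(\ell^\infty)$ with $I-\tilde f(z)\tilde f(w)^*=\sum_j(1-z_j\overline{w_j})K_j(z,w)$ in SOT. Restricting these kernels to $X$ gives (ii).

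\emph{$(ii)\Rightarrow(i)$.} The plan is to apply Lemma~\ref{Decom-to-realization} to $X$, so the first task is to verify its absolute-summability hypothesis \eqref{abso}. Since $X$ is finite and contained in $\text{Ball}(\ell^\infty)$, there is $r<1$ with $|z_j|\le r$ for all $z\in X$ and all $j$. Taking $z=w$ and pairing with $x$ gives
\[
(1-r^2)\sum_j\langle K_j(z,z)x,x\rangle\le\sum_j(1-|z_j|^2)\langle K_j(z,z)x,x\rangle\le\|x\|^2 .
\]
Cauchy--Schwarz for each kernel $K_j$, followed by Cauchy--Schwarz in $\ell^2$ over $j$, then yields $\sum_j|\langle K_j(z,w)x,y\rangle|\le \|x\|\|y\|/(1-r^2)$, which is \eqref{abso}.

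The lemma then supplies a contraction $V$ and the realization $f=A+B(I-\Delta D)^{-1}\Delta C$ on $X$. We define $\tilde f$ by this formula on all of $\text{Ball}(\ell^\infty)$, so $\tilde f$ agrees with $f$ on $X$ by construction. The lemma also gives, on $\text{Ball}(\ell^\infty)$,
\[
I-\tilde f(z)\tilde f(w)^*=K_0(z,w)+\sum_j(1-z_j\overline{w_j})F(z)P_jF(w)^* \quad\text{(SOT)},
\]
with $F(z)=B(I-\Delta(z)D)^{-1}$ anti-holomorphic in $w$ after taking adjoints, so all terms are holomorphic kernels. For $z,w\in\mathbb D^n$ (embedded in $c_{00}$), all coordinates with index $j>n$ vanish. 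Hence $K_0+\sum_{j>n}F P_jF^*$ is an SOT-convergent sum of holomorphic kernels, and it can be absorbed into the $j=1$ term by dividing by $1-z_1\overline{w_1}$. This is exactly condition (3) of Theorem~\ref{Agler decomposition realization}, so $\tilde f|_{\mathbb D^n}\in\mathcal{SA}_n$ for every $n$, which proves (i).

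\emph{$(ii)\Rightarrow(iii)$.} Let $K$ be admissible on $X$. Then
\[
(I-f\otimes f^*)\otimes K=\sum_j K_j\otimes\big((1-Z_j\otimes Z_j^*)K\big).
\]
Each summand is the pointwise tensor product of two kernels, hence a kernel, because $K_j$ is a kernel and $(1-Z_j\otimes Z_j^*)K\succeq0$ by admissibility. The partial sums are therefore kernels, and their SOT limit, which exists since the original series converges in SOT and $X$ is finite, is again positive. For $(iii)\Rightarrow(ii)$ when $X\subseteq\mathbb D_2^\infty$, we apply Theorem~\ref{AdmissibleDecomposition} with the self-adjoint function $G=I-f\otimes f^*$.

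The main obstacle is the step $(ii)\Rightarrow(i)$. One must check the absolute-summability hypothesis \eqref{abso}, which uses the uniform bound $|z_j|\le r<1$ on the finite set $X$. One must also be careful that the finite-variable restrictions of the realization really satisfy the holomorphic Agler condition of Theorem~\ref{Agler decomposition realization}, which depends on the absorption of the tail terms $K_0+\sum_{j>n}FP_jF^*$ into the first summand.
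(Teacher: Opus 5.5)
Your proposal is correct and follows the paper's proof. It gets (i)$\Leftrightarrow$(ii) from Theorem~\ref{Auto-Ext} and Lemma~\ref{Decom-to-realization}, (ii)$\Rightarrow$(iii) from positivity of pointwise tensor products of kernels, and (iii)$\Rightarrow$(ii) from Theorem~\ref{AdmissibleDecomposition} with $G=I-f\otimes f^*$; beyond the paper, you check \eqref{abso} via $r=\max_{z\in X}\|z\|_\infty<1$ and absorb $K_0+\sum_{j>n}FP_jF^*$ into the first summand to reach condition (3) of Theorem~\ref{Agler decomposition realization}, details the paper leaves implicit.
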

\begin{proof}
$(i) \iff (ii)$: By Theorem~\ref{Auto-Ext}(2), the forward implication is immediate. The other implication follows from \textcolor{blue}{Lemma~\ref{Decom-to-realization}.}

$(ii) \implies (iii)$: This is trivial.

$(iii) \implies (ii)$: This is immediate by Theorem \ref{AdmissibleDecomposition} by taking $G=I_{\mathcal E_*}-f\otimes f^*.$
\end{proof}

\noindent\textbf{Acknowledgement:} The authors express their deepest gratitude to Prof. Bata Krishna Das for his invaluable guidance and numerous insightful and stimulating discussions throughout the development of this work. The first named author is supported by a National Postdoctoral Fellowship (N-PDF) provided by the ANRF, India
(File number: PDF/2025/006823), and an Institute postdoctoral fellowship from the Indian Institute of Technology Bombay. The second named author acknowledges the support received from the Indian Institute of Technology Bombay through an Institute postdoctoral fellowship. 

\vspace{0.1in}
\noindent \textbf{AI Declaration:}
AI has been used to gain an idea of why Example \ref{exam} works and to improve the writing of the manuscript.

\vspace{0.1in}
\noindent\textbf{Data Availability:}
Data sharing is not applicable to this article, as no datasets were generated or analyzed during
the current study. In case any datasets are generated during and/or analyzed during the current study, they must
be available from the corresponding author on reasonable request.

\vspace{0.1in}
\noindent\textbf{Conflict of interest:} There is no conflict of interest.

\end{document}